\documentclass[12pt,reqno]{amsart}
\usepackage{latexsym, amsmath, amscd, amssymb, amsthm,longtable} 
\usepackage[T2A]{fontenc}
 \usepackage[cp1251]{inputenc}
\usepackage[english]{babel}
\newtheorem{theorem}{Theorem}[section]

\newtheoremstyle{myexample}
  {\topsep}
  {\topsep}
  {\normalfont}
  {}
  {\bfseries}
  {.}
  {.5em}
  {}

\theoremstyle{myexample}
\newtheorem{examplenb}{Example}[section]

\newenvironment{example}
  {\begin{examplenb}}
  {\hfill$\triangle$\end{examplenb}}

\newtheorem{lemma}{Lemma}[section]

\usepackage{hyperref}
\usepackage{mathrsfs}

\begin{document}

\noindent
\title[Projective Differential Moment Invariants] {Projective Differential Moment Invariants from the Viewpoint of Classical Invariant Theory}

\author{Leonid Bedratyuk}
\address{Khmelnytskyi National University, Insituts'ka st., 11, Khmelnytskyi, 29016, Ukraine}
\email{leonidbedratyuk@khmnu.edu.ua}
\begin{abstract}
Polynomial first-order projective differential moment
invariants are investigated.
The main result establishes an exact correspondence
between such moment invariants and symmetric polynomial
multipoint relative projective differential invariants
of weight $-1$. This correspondence provides an algebraic
description of each homogeneous component, with respect
to moment degree, of the space of polynomial projective
differential moment invariants and makes it possible to use
previously obtained results on relative projective invariants
for their systematic construction.
It is also proved separately that the passage from the
formal algebra of moments to their integral definitions
does not produce additional algebraic relations among
the constructed invariants.
For low moment degrees, the absence of nontrivial invariants
of degrees $1$ and $2$ is established, and a complete
description of the invariants of degree $3$ is obtained.
The ten cubic differential moments are naturally identified
with the coefficients of a ternary cubic, thereby establishing
a connection between the theory developed here and classical
invariant theory.
In particular, the Aronhold invariants $S$ and $T$ generate
projective differential moment invariants of degrees $4$
and $6$. It is also shown that one of the known moment
invariants of Wang et al. is proportional to the invariant $S$.
It is proved that the algebra of polynomial moment
invariants is not finitely generated.
It is also established that the field of rational
moment invariants coincides with the field of fractions
of the algebra of polynomial moment invariants.
For the explicit constructions, compact multipoint
representations are provided that define the corresponding
moment polynomials without the need to write out their
full expansions in the text.
These representations provide a basis for the algorithmic
construction and software computation of the invariants.

\noindent\textbf{Keywords:}
projective moment invariants;
differential moments;
Lie algebra $\mathfrak{sl}_3$;
relative invariants;
polarization;
ternary cubic;
Aronhold invariants;
symbolic method.
\end{abstract}

\maketitle  
\section{Introduction}

Moment invariants are a classical tool for constructing global image characteristics that are insensitive to a prescribed class of
geometric transformations. Since the work of Hu~\cite{Hu}, an extensive theory of moment invariants has been developed for
groups of rotations, scalings, similarity transformations, and affine
transformations in general. Such invariants have found applications in pattern recognition, image registration, and
object identification.
A systematic account of the theory of moment invariants and their applications is given
in~\cite{FSB}.

Projective transformations present a more difficult case. Unlike the affine action, the Jacobian of a projective transformation depends on
the coordinates of the point, and therefore a finite system of ordinary geometric moments is not closed under this action. This prevents
the direct extension of classical affine constructions to the projective group.
Several approaches have been proposed to overcome this obstacle.
Suk and Flusser~\cite{SukFlusser2004} considered
representations of projective moment invariants
by infinite series involving moments with positive
and negative indices. Another approach uses
additional geometric information. In particular, Wang,
Zhang, and Yao~\cite{WangCoMoments} introduced co-moments
constructed using two reference points.
In this case, finite formulas are obtained at the cost
of having to identify the corresponding points in the images.

An important line of research involves incorporating derivatives
of the image function into moment kernels. Wang and
coauthors~\cite{Wang} proposed a system of
$D$-moments that incorporates coordinates, intensity,
and first derivatives, and used it to construct families
of projective moment expressions. Li, Mo, Xu, and Li~\cite{Li2019} refined this construction,
studied relative projective differential invariants,
and used them as weight functions to construct
finite weighted moment invariants. Their proposed
constructions involve higher-order derivatives of the
intensity function, whose numerical estimation is substantially
more sensitive to noise and discretization errors. Therefore, in this
paper we restrict attention to first-order derivatives, seeking
to preserve projective invariance with less numerical
instability. Higher-order absolute projective image invariants are also considered in the work of Olver~\cite{Olver2023}.

Previous studies have shown that using derivatives
makes it possible to construct finite projective
moment characteristics of images. However, the known constructions
are largely restricted to individual invariant formulas or individual families
of invariants, leaving aside the problem of systematically describing the entire algebra of polynomial
projective differential moment invariants.
This raises a structural problem: to provide a general
algebraic description of all polynomial projective
differential moment invariants and to construct
a systematic mechanism for obtaining them that is not reduced
to searching for individual invariant formulas.
A further question concerns rational moment invariants:
do they form the field of fractions of the algebra of polynomial
invariants, that is, can every rational invariant be
expressed as a ratio of two polynomial invariants?

By its nature, this question belongs to invariant
theory. A similar viewpoint has already proved productive
for ordinary geometric moments. In
the work~\cite{Bedratyuk2020}, algebras of two-dimensional
geometric moment invariants were related
to algebras of joint invariants of binary forms.
This approach replaces the search for individual unwieldy
expressions with the study of a common algebraic structure.

For the projective group, implementing an analogous program
proved substantially more difficult. It was first necessary to develop a theory of
multipoint projective differential
invariants and to identify relative
invariants whose transformation law compensates for
the Jacobian of the projective change of variables.
This problem was addressed successively in three previous papers by the author.
In~\cite{B-2025-1}, the field of multipoint
first-order projective differential invariants
for $n$ points in the plane was described, explicit
generators of the field of absolute invariants were constructed,
and relative invariants associated
with the Jacobian of the projective action were studied.
In the work~\cite{BedratyukFrames}, the same problem
was considered using the moving frame method.
This provided a geometric way of obtaining fundamental
multipoint projective differential
invariants and made it possible to trace directly
the emergence of the relative multiplier associated
with the Jacobian of the projective transformation.
Finally, constructing moment invariants required
a transition from the rational to the polynomial theory. In the work~\cite{B-2026}, classical
$SL(3,\mathbb R)$-invariant theory was used to describe
polynomial multipoint projective
differential invariants and, of particular importance
for the present paper, to characterize relative
invariants of weight $-1$.
Thus, only after this preliminary
invariant-theoretic foundation had been established did it become possible to return to
the original problem of projective moment invariants.

The aim of this paper is to develop a general theory
of polynomial first-order projective differential moment
invariants. To this end, we use
previously obtained results on relative polynomial
invariants of the projective group action on the space of polynomials
in first-jet coordinates. In particular, explicit invariants of low
moment degrees are obtained, their connection
with classical invariant theory of ternary forms is established,
and compact represen\-tations are provided
that are suitable for software computation
of invariant image characteristics.

For a twice-differentiable function $f$ with compact support,
consider the differential moments
$$
m_{\alpha,\beta,r,s}(f)
=
\int_{\mathbb R^2}
x^\alpha y^\beta f_x^r f_y^s f(x,y)\,dx\,dy,
\qquad
\alpha,\beta,r,s\in\mathbb Z_{\geq0}.
$$
The corresponding formal moment symbols
$m_{\alpha,\beta,r,s}$ generate the polynomial algebra
$$
\mathcal{DP}
=
\mathbb R[m_{\alpha,\beta,r,s}].
$$

A projective transformation does not, in general,
preserve this algebra: after a change of variables,
a polynomial moment kernel may become rational,
so the projective group action is not defined on the whole of
$\mathcal{DP}$. At the same time, the action of its Lie algebra
$\mathfrak{pgl}_3\simeq\mathfrak{sl}_3$ is well defined
on the formal moment variables and is given
by derivations of the algebra $\mathcal{DP}$. This allows us
to define the algebra of polynomial projective
differential moment invariants as
$$
\mathcal{MI}
=
\mathcal{DP}^{\mathfrak{sl}_3},
$$
that is, as the subalgebra of moment polynomials annihilated
by all operators of this action.

Classical invariant theory offers two natural approaches
to studying such an algebra. The first consists in identifying
finite-dimensional $\mathfrak{sl}_3$-modules in
$\mathcal{DP}$ and applying standard
invariant-theoretic methods to them. This is precisely how
the module of ten cubic differential moments will be
identified below with the space
of ternary cubics; the classical Aronhold invariants $S$ and $T$
then directly yield two projective moment
invariants. However, this approach requires first
finding the appropriate finite-dimensional module
and therefore does not by itself lead to a general description
of all moment invariants.

The second approach, which is central to this paper, also
has its origins in classical invariant theory. Instead of
searching directly for invariants in the moment algebra,
we pass to a multipoint polynomial
representation of the projective group in which the projective transformation law
has a simpler algebraic structure. This makes it possible
to use the already known description of relative projective
invariants and is conceptually similar to the symbolic method
of classical invariant theory: compact multipoint kernels
are used in place of unwieldy polynomials in
moment variables.

Let $\mathcal P_{n,-1}$ denote the space
of polynomial relative invariants of weight $-1$
for the diagonal projective action on $n$ points,
and let $\mathcal{MI}_n$ denote the space of moment
invariants homogeneous of moment degree $n$.
The main result of this paper states that, for every
$n\geq3$, there is an isomorphism
$$
\Phi_n:
(\mathcal P_{n,-1})^{S_n}
\xrightarrow{} 
\mathcal{MI}_n,
\qquad
\Phi_n^{-1}=\operatorname{Pol}_n.
$$
Here, $(\mathcal P_{n,-1})^{S_n}$ denotes the subspace
of invariants symmetric under permutations of the $n$ points.
The map $\Phi_n$ sends such symmetric invariants to
moment polynomials, while complete polarization
$\operatorname{Pol}_n$ provides the inverse passage to
relative invariants.

Thus, constructing polynomial projective
differential moment invariants reduces to
describing the symmetric part of $\mathcal P_{n,-1}$.
Since the space $\mathcal P_{n,-1}$ itself has already been described, this correspondence
provides a constructive method for obtaining moment invariants.
At the same time, it allows them to be represented by compact
formulas for relative invariants, without expanding the corresponding expressions
into polynomials with hundreds or thousands of moment monomials.

The passage from the formal moment
algebra to differential moments as integral
functionals of $f$ is justified separately. It is proved that,
on the subalgebra containing the constructed invariants,
this integral realization is injective, and hence the passage
from formal moments to their integral definitions does not
produce additional algebraic relations.

Explicit results are obtained for low moment degrees:
nonzero invariants of degrees $1$ and $2$ do not exist,
and a complete description of the invariants of degree $3$
is obtained in terms of the corresponding symmetric relative
projective invariants. The ten cubic
differential moments form a module of ternary cubics,
so the classical Aronhold invariants $S$ and $T$
yield moment invariants of degrees $4$ and $6$.
In particular, $S$ is proportional to the known invariant
$\operatorname{Inv}_{2,0}$ from~\cite{Wang}.

We use the following terminology throughout.
The principal object is called
a \textit{projective differential moment invariant};
within this paper, we use the abbreviated term
\textit{moment invariant}.
For relative invariants depending on several
first jets, we use the term
\textit{multipoint relative projective
differential invariant}.
When the number of points needs to be specified, we call
it \textit{$n$-point};
when the context is unambiguous, we abbreviate the term
to \textit{relative invariant}.

A polynomial $b(x,y,p,q)$ appearing in the integral
$\int_\Omega b(x,y,p,q)f(x,y)\,dx\,dy$
is called a \textit{single-point moment kernel}.
A polynomial $Q(z_1,\ldots,z_n)$ that defines
a moment polynomial through the map $\Phi_n$
is called a \textit{multipoint moment kernel}.
We use the abbreviated term \textit{kernel}
when the number of points is clear from the context.

In this paper, we identify the real Lie groups $PGL(3,\mathbb R)$ and $SL(3,\mathbb R)$, as well as
their Lie algebras $\mathfrak{pgl}_3$ and $\mathfrak{sl}_3$.
For the geometric action on the plane, we use
the notation $PGL(3,\mathbb R)$, while for the corresponding
linear representations and classical invariants, we use
$SL(3,\mathbb R)$.
We denote the operator action on the formal moment algebra
as an action of $\mathfrak{sl}_3$.

The paper is organized as follows. In
Section~\ref{sec:main_results}, the main results
are presented in a form suitable for direct
use. Section~\ref{sec:preliminaries}
provides the necessary background on the projective action
and multipoint relative invariants.
Section~\ref{sec:projective-moment-invariants}
describes the Lie algebra action on the moment algebra,
proves the correspondence theorem, and justifies
the passage from the formal moment algebra to its
integral realization.
Section~\ref{low_order_invariants}
is devoted to explicit constructions of invariants of low
moment degrees, structural properties of the
moment algebra, and a description of rational invariants.

\section{Main Results}
\label{sec:main_results}

In this section, we present the main invariant constructions in a form suitable for direct use.
Proofs of these results and generalizations will be given
in the subsequent sections.

\subsection{Ternary Cubic}

Let $f=f(x,y)$ be a twice-differentiable intensity function
of a grayscale image. Let
$
\Omega=\operatorname{supp}f
$
denote the image domain, and, to simplify the subsequent formulas, set
$$
p=f_x,\qquad q=f_y,\qquad w=xp+yq.
$$

Consider the ten differential moments
\begin{equation*}
\mu_{ijk}
=
\int_\Omega p^iq^j(-w)^k f(x,y)\,dx\,dy,
\qquad i+j+k=3,
\end{equation*}
which we will regard as the coefficients of a ternary cubic
\begin{equation*}
\begin{gathered}
\mathcal F(\xi,\eta,\zeta)
=
\mu_{300}\xi^3+\mu_{030}\eta^3+\mu_{003}\zeta^3
+3\mu_{210}\xi^2\eta+3\mu_{120}\xi\eta^2
+3\mu_{201}\xi^2\zeta
\\
+3\mu_{102}\xi\zeta^2
+3\mu_{021}\eta^2\zeta
+3\mu_{012}\eta\zeta^2
+6\mu_{111}\xi\eta\zeta.
\end{gathered}
\end{equation*}
Under this identification, the space spanned by the coefficients
$\mu_{ijk}$ is a $10$-dimensional $SL(3,\mathbb R)$-module
dual to the classical module of ternary cubics in the variables
$\xi,\eta,\zeta$.
The problem of constructing projective invariants as polynomials in these
ten moments reduces to the problem of constructing
invariants of a ternary cubic. Thus, it can be solved
by directly applying known results from classical invariant theory;
see~\cite{Aronhold1850}:
\begin{theorem}
All polynomial invariants of a ternary cubic, viewed as polynomials in $\mu_{ijk}$, are generated by two algebraically independent
invariants of degree $4$ and degree $6.$
\end{theorem}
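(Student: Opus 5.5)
The plan is to reduce the statement to the classical theorem of Aronhold on ternary cubics, with two things to check. First, the identification of the $\mu_{ijk}$ with cubic coefficients must be compatible with the group action. Second, the classical result must transfer from complex to real coefficients.

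\textbf{Step 1: equivariance.} Under a projective change of variables, the first jet transforms so that the triple $(p,q,-w)$ behaves, up to a scalar factor, like a covector in homogeneous coordinates. Indeed, $p\,dx+q\,dy$ is the differential of $f$. After lifting $(x,y)\mapsto(x:y:1)$, the linear form $p\xi+q\eta-w\zeta$ is, up to the Jacobian-type multiplier, transformed by the contragredient matrix. I would verify this infinitesimally by checking the eight generators of $\mathfrak{sl}_3$ on $p,q,w$. Then the Jacobian factor of $dx\,dy$ is exactly compensated by the multiplier of the cubic kernel $(p\xi+q\eta-w\zeta)^3$; this is the weight $-1$ phenomenon mentioned in the introduction. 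It follows that $\mathcal F=\int_\Omega (p\xi+q\eta-w\zeta)^3 f\,dx\,dy$, with the stated multinomial coefficients, transforms as a ternary cubic. Hence the span of the $\mu_{ijk}$ is the $10$-dimensional module $S^3$ (dual form), and $\mathfrak{sl}_3$-invariant polynomials in the $\mu_{ijk}$ are exactly the $SL(3)$-invariants of a ternary cubic.

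\textbf{Step 2: the classical theorem over $\mathbb C$.} Here I would invoke Aronhold's result in its modern form: $\mathbb C[S^3\mathbb C^3]^{SL_3}=\mathbb C[S,T]$ with $\deg S=4$ and $\deg T=6$. The route is as follows. Every smooth cubic is equivalent to the Hesse normal form $\xi^3+\eta^3+\zeta^3+6m\xi\eta\zeta$. An invariant is determined by its restriction to this pencil, and that restriction is invariant under the Hessian group of order $216$, acting on $m$ through a finite group. The invariant ring of the pencil, restricted to polynomial invariants, is generated by the restrictions of $S$ and $T$; this can be checked through the Hilbert series or by showing $S|$ and $T|$ generate. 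Algebraic independence follows because $j\propto S^3/(S^3-cT^2)$, up to normalization, is nonconstant.

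\textbf{Step 3: descent to $\mathbb R$.} The invariants $S$ and $T$ have rational coefficients. Since $SL(3,\mathbb R)$ is Zariski dense in $SL(3,\mathbb C)$, equivalently since the Lie algebra condition is linear and defined over $\mathbb Q$, real invariants are exactly real points of the complex invariant ring. So $\mathbb R[\mu_{ijk}]^{\mathfrak{sl}_3}=\mathbb R[S,T]$.

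The main obstacle is Step 2, the generation statement. I would not reprove it; I would cite \cite{Aronhold1850} and standard texts, relying on the Molien–Weyl Hilbert series $1/((1-t^4)(1-t^6))$ as the verification. The only genuinely paper-specific work is the equivariance in Step 1.
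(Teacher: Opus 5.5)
Your proposal is correct and follows the same route as the paper. The paper gives no independent proof: it identifies the ten moments $\mu_{ijk}$ with the coefficients of the ternary cubic $\mathcal F$ and cites Aronhold's classical theorem. Your additional steps only make explicit what the paper leaves implicit: the infinitesimal check that $(p,q,-w)$ transforms contragrediently (indeed $\tilde\ell=s\,\ell H^{-1}$, which follows from the prolongation formulas for $\tilde p,\tilde q$), and the descent from $\mathbb C$ to $\mathbb R$.
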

Denote these two generating invariants by $S$ and $T$, and, to express them explicitly, set:

\begin{gather*}
A=\mu_{300}, \qquad B=\mu_{030}, \qquad C=\mu_{003}, \qquad
D=\mu_{210}, \qquad E=\mu_{021}, \\
F=\mu_{102}, \qquad
G=\mu_{120}, \qquad H=\mu_{012}, \qquad I=\mu_{201}, \qquad J=\mu_{111}.
\end{gather*}

We use the notation $A,B,\ldots,J$
as local abbreviations for these moments
only in the explicit polynomial formulas for the invariants
$S$ and $T$, including the formula in the appendix.
Elsewhere in the text, we retain
the indexed notation $\mu_{ijk}$.

The first invariant then takes the form
\begin{align}
S={}&
AGEC-AGH^2-AJBC+AJEH+AFBH-AFE^2
\nonumber\\
&-D^2EC+D^2H^2+DIBC-DIEH+DGJC-DGFH
\nonumber\\
&-2DJ^2H+3DJFE-DF^2B-I^2BH+I^2E^2
\nonumber\\
&-IG^2C+3IGJH-IGFE-2IJ^2E+IJFB
\nonumber\\
&+G^2F^2-2GJ^2F+J^4.
\label{eq:Aronhold_S}
\end{align}
It contains $25$ monomials and is proportional to the invariant
$\operatorname{Inv}_{2,0}$ from~\cite{Wang}.
The second invariant, $T$, contains $103$ monomials; its formula
is given in the appendix; see~\eqref{eq:Aronhold_T}.

Thus, the known 25-term expression is one of the two
fundamental invariants of this moment module.
All other projective moment invariants obtained have substantially more unwieldy explicit expansions in differential moments. Some of the invariants constructed in this paper are implemented
in the accompanying software repository, which allows them
to be computed without explicitly expanding unwieldy
polynomial formulas. Therefore, instead of full formulas,
we use their compact symbolic representations, which can, when necessary, be expanded into explicit polynomials
in differential moments.

\subsection{Projective Moment Invariants of Degree Three}
\label{subsec:degree3_results}

For the full projective group, nonzero moment invariants of degrees $1$ and $2$
do not exist, and the lowest degree at which new
invariants arise is degree $3$.
To write them compactly, we use three points
$$
z_i=(x_i,y_i,p_i,q_i), \quad p_i=\partial_{x_i}f(x_i,y_i), \quad q_i=\partial_{y_i}f(x_i,y_i), \quad w_i=x_ip_i+y_iq_i,
\qquad i=1,2,3,
$$

Set
$$
\ell_i=(p_i,q_i,-w_i),
\qquad
v_i=(x_i,y_i,1).
$$
For distinct indices $i,j,k$, define
$$
D_{ijk}
=
\det
\begin{pmatrix}
p_i&q_i&-w_i\\
p_j&q_j&-w_j\\
p_k&q_k&-w_k
\end{pmatrix},
\qquad
C_{ij}=\ell_i\cdot v_j
=p_ix_j+q_iy_j-w_i.
$$
In formal expansions in $D$-moments, the quantities
$w_i$ are temporarily treated as independent symbols.
It is for this form that we subsequently report the number
of monomials, the number of $D$-moments used,
and the $D$-order of the representation.
When realizing these expressions on first jets, we substitute
$w_i=x_ip_i+y_iq_i$, so that
$$
C_{ij}
=
p_i(x_j-x_i)+q_i(y_j-y_i).
$$
Thus, the independence of the symbols $w_i$ is merely
a convention for the formal expansion;
the geometric transformation laws apply
after the indicated substitution.

To pass from multipoint moment kernels
to moment polynomials, we use
the map $\Phi_n$.
In terms of $D$-moments (see Wang~\cite{Wang}),
$$
\phi_{\alpha\beta rst}
=
\int_\Omega x^\alpha y^\beta p^rq^sw^t f(x,y)\,dx\,dy
$$
it is defined by
$$
\Phi_n\left(
\prod_{\nu=1}^{n}
x_\nu^{\alpha_\nu}y_\nu^{\beta_\nu}
p_\nu^{r_\nu}q_\nu^{s_\nu}w_\nu^{t_\nu}
\right)
=
\prod_{\nu=1}^{n}
\phi_{\alpha_\nu\beta_\nu r_\nu s_\nu t_\nu}
$$
and on arbitrary multipoint polynomial kernels by linear extension;
for example,
$$
\Phi_3(x_1p_1^2\,y_2q_2^2\,p_3q_3w_3)
=
\phi_{10200}\phi_{01020}\phi_{00111}.
$$

This definition of $\Phi_n$ agrees with its
definition on polynomials in the variables $z_i$,
which is used below.
Indeed, substituting $w=xp+yq$ gives
$$
\phi_{\alpha\beta rst}
=
\sum_{h=0}^{t}\binom{t}{h}
m_{\alpha+h,\,\beta+t-h,\,r+h,\,s+t-h}.
$$
In the formal moment algebra, we take this equality
as the definition of the symbol $\phi_{\alpha\beta rst}$.
Thus, one may first apply $\Phi_n$
in the $D$-moment notation and then expand
$\phi$ in terms of $m$, or first substitute
$w_i=x_ip_i+y_iq_i$ into the kernel and apply $\Phi_n$
to the resulting polynomial.
Both procedures yield the same element
of $\mathcal{DP}_n$.

We subsequently use multipoint kernels
to represent moment constructions. A short formula for such a kernel
in terms of the determinants $D_{ijk}$ and the cycle factors $C_{ij}$
can define a moment expansion with hundreds or thousands
of terms. Therefore, general constructions are presented primarily
through these symbolic formulas, while invariants
of low moment degrees are considered in greater detail
as explicit examples of the general approach.
In this notation, both invariants of the ternary cubic
have compact representations

\begin{align*}
S&=
\frac1{24}\,
\Phi_4(D_{123}D_{124}D_{134}D_{234}),\\
T&=
-\frac16\,
\Phi_6(D_{123}D_{124}D_{135}D_{236}D_{456}^{\,2}).
\end{align*}

For three points, introduce cycle constructions, that is, products
of the quantities $C_{ij}$ in which the indices form a closed cycle.
In particular,
\begin{equation}
\label{dva}
\mathsf X=C_{12}C_{21},\qquad
\mathsf Y=C_{13}C_{31},\qquad
\mathsf Z=C_{23}C_{32}.
\end{equation}
correspond to 2-cycles, while
\begin{equation}
\label{try}
U=C_{12}C_{23}C_{31},
\qquad
V=C_{13}C_{32}C_{21}
\end{equation}
correspond to the two possible oriented 3-cycles.

These cycle quantities satisfy the relation
$$
UV
=
(C_{12}C_{23}C_{31})
(C_{13}C_{32}C_{21})
=
\mathsf X \mathsf Y \mathsf Z.
$$
Thus, $\mathsf X,\mathsf Y,\mathsf Z,U,V$ are not algebraically independent, and
the representation of a polynomial in the form
$Q(\mathsf X,\mathsf Y,\mathsf Z,U,V)$ is generally not unique. Hereafter, such notation
will be understood subject to the relation
$
UV=\mathsf X \mathsf Y \mathsf Z.
$
Nevertheless, these five cycle quantities form a convenient
generating system for describing the relative invariants
that arise in the three-point case.

\begin{theorem}
\label{all_degree3}
Every invariant $I\in\mathcal{MI}_3$ has the form
$$
I=
\Phi_3\left(D_{123}^{\,3}Q(\mathsf X,\mathsf Y,\mathsf Z,U,V)\right),
$$
where $Q$ is antisymmetric under permutations of the labels
of the three points.
Conversely, every such expression is a projective
differential moment invariant.
\end{theorem}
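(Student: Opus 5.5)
The plan is to reduce the statement to the main correspondence theorem announced in the introduction. For $n=3$, $\Phi_3$ is an isomorphism from $(\mathcal P_{3,-1})^{S_3}$ onto $\mathcal{MI}_3$, with inverse $\operatorname{Pol}_3$. It then suffices to prove the following claim. A polynomial relative invariant of weight $-1$ in three points $z_1,z_2,z_3$ is precisely an element of the form $D_{123}^{3}Q(\mathsf X,\mathsf Y,\mathsf Z,U,V)$. Moreover, such an element is symmetric under $S_3$ exactly when $Q$ is antisymmetric. Granting this, the first assertion follows by writing $I=\Phi_3(\operatorname{Pol}_3 I)$. The converse follows by applying $\Phi_3$ to a symmetric element of $\mathcal P_{3,-1}$.

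For the description of $\mathcal P_{3,-1}$ I would invoke the result of \cite{B-2026} on polynomial multipoint relative invariants, together with the first-jet transformation laws from Section~\ref{sec:preliminaries}. Under $g\in SL(3,\mathbb R)$ the covectors $\ell_i=(p_i,q_i,-w_i)$ transform contragrediently, up to a scalar $\lambda_i$ attached to point $i$. The vectors $v_i=(x_i,y_i,1)$ transform linearly, up to a scalar $\rho_i$, which is the projective normalization. Both scalars are tied to the Jacobian at the point $(x_i,y_i)$. Consequently:
\begin{itemize}
\item $D_{123}$ picks up $\lambda_1\lambda_2\lambda_3$;
\item $C_{ij}=\ell_i\cdot v_j$ picks up $\lambda_i\rho_j$;
\item every closed cycle picks up $\prod\lambda_i\rho_i$ over its vertices.
\end{itemize}
One checks that the factor $\lambda_i\rho_i$ is trivial, so $\mathsf X,\mathsf Y,\mathsf Z,U,V$ are absolute invariants. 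The weight $-1$ condition, meaning that the factor equals $J(z_i)^{-1}$ at each point, is then met by $D_{123}^3$. This shows that every $D_{123}^3Q$ lies in $\mathcal P_{3,-1}$.

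For the converse inclusion I would use the generic orbit structure. Generically, the three lines $\ell_i$ are independent, so $D_{123}\neq0$. One can then use $PGL_3$ to normalize the $\ell_i$ to the coordinate lines. The remaining stabilizer is the diagonal torus, and the residual invariants of the points $v_j$ are exactly the cycle products in the $C_{ij}$. Hence any $P\in\mathcal P_{3,-1}$ satisfies $P/D_{123}^3=R(\mathsf X,\mathsf Y,\mathsf Z,U,V)$ for some rational function $R$. Showing that $R$ is in fact a polynomial, so that no denominators beyond powers of $D_{123}$ occur and $D_{123}^3$ divides $P$, is the step I expect to be the main obstacle. Here I would rely on the explicit description of $\mathcal P_{n,-1}$ from \cite{B-2026}, specialized to $n=3$, rather than reprove it. The nonuniqueness of $Q$ coming from $UV=\mathsf X\mathsf Y\mathsf Z$ causes no difficulty, since the statement only asserts existence.

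Finally, the symmetry bookkeeping goes as follows. A transposition of two labels changes the sign of $D_{123}$, hence of $D_{123}^3$, and permutes $\mathsf X,\mathsf Y,\mathsf Z$ among themselves. An odd permutation also exchanges $U$ and $V$. So $D_{123}^3Q$ is $S_3$-symmetric if and only if $Q$, with its arguments transformed in this way, changes sign under odd permutations and is fixed by even ones. This is the stated antisymmetry, understood modulo the relation $UV=\mathsf X\mathsf Y\mathsf Z$.
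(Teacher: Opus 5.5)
Your proposal follows the paper's route. It reduces via Theorem~\ref{thm:moment-completeness-formal} to the $S_3$-symmetric part of $\mathcal P_{3,-1}$, takes that space from the admissible-monomial spanning theorem of \cite{B-2026}, and finishes with the sign of $D_{123}^{\,3}$ under odd permutations, which also swap $U$ and $V$. The generic-orbit detour is unnecessary. The $n=3$ specialization you leave implicit is exactly the paper's proof: $\sum_I b_I=3$ forces $b_{123}=3$, hence $d_{\boldsymbol b}(i)=3$ and $\partial_{\boldsymbol a}(i)=0$ for every $i$, so each $C$-monomial is balanced and factors into the closed cycles $\mathsf X,\mathsf Y,\mathsf Z,U,V$.
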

Antisymmetry means that the polynomial $Q$ changes sign under every odd permutation
of the three points. A complete justification
of this description and a proof that it includes all invariants
of moment degree three are given in
Section~\ref{subsec:degree_three}.

\medskip

A particularly simple family is obtained from polynomials
in $U,V$ alone. Consider the vector subspace
$$
W_a
=
\operatorname{span}
\left\{
\phi_{\alpha\beta rst}
\mid
\alpha+\beta\leq a,\quad r+s+t=a+3
\right\}
\subset\mathcal{DP}_1,
\qquad a\geq0.
$$
This generating system is generally
not linearly independent. Indeed, the equality
$w=xp+yq$ implies
$$
\phi_{\alpha\beta rs,t+1}
=
\phi_{\alpha+1,\beta,r+1,s,t}
+
\phi_{\alpha,\beta+1,r,s+1,t}.
$$
Therefore, hereafter $\operatorname{Sym}^3(W_a)$
will denote the subspace of $\mathcal{DP}_3$ spanned by
all products
$$
M_1M_2M_3,
\qquad
M_1,M_2,M_3\in W_a.
$$
The following theorem presents an infinite two-parameter family of projective differential moment invariants of degree $3$, parameterized by the integers $a$ and $k$.

\begin{theorem}
\label{J-family}
For every $a\geq1$, the invariants
$$
J_{a,k}
=
\Phi_3\left[
D_{123}^{\,3}
\left(U^{a-k}V^k-U^kV^{a-k}\right)
\right],
\qquad
k=0,\ldots,\left\lfloor\frac{a-1}{2}\right\rfloor,
$$
form a basis of the space of polynomial moment invariants in
$\operatorname{Sym}^3(W_a)$.
\end{theorem}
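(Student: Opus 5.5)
We plan to derive this from Theorem~\ref{all_degree3} combined with a bookkeeping of degrees. The first step is to determine which multipoint kernels $Q\in(\mathcal P_{3,-1})^{S_3}$ have $\Phi_3(Q)\in\operatorname{Sym}^3(W_a)$. An element of $W_a$ comes from a single-point kernel in $x,y,p,q,w$ that is homogeneous of degree $a+3$ in $(p,q,w)$ and of degree at most $a$ in $(x,y)$. A product $M_1M_2M_3$ therefore corresponds, via $\Phi_3$ and the symmetrization implicit in $\operatorname{Pol}_3$, to a kernel with three properties: it has degree exactly $a+3$ in each $\ell_i=(p_i,q_i,-w_i)$, its degree in $v_i$ is at most $a$, and the $v_i$ enter only through the affine coordinates $x_i,y_i$. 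Theorem~\ref{all_degree3} lets us write the kernel as $D_{123}^3Q(\mathsf X,\mathsf Y,\mathsf Z,U,V)$. Here $D_{123}^3$ has degree $3$ in each $\ell_i$ and degree $0$ in the $v_i$. Each $C_{ij}$ is linear in $\ell_i$ and linear in $v_j$. Hence $Q$ must be a combination of cycle monomials in which each point appears exactly $a$ times as a "source" $\ell_i$ and at most $a$ times as a "target" $v_j$.

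The second step is a counting argument on cycle monomials. Write the monomial as $\mathsf X^{x}\mathsf Y^{y}\mathsf Z^{z}U^{u}V^{v}$. The number of times point $i$ appears as a source equals the number of times it appears as a target, since every cycle enters and leaves each vertex it visits. So the constraint is that every point has degree exactly $a$, which gives the equations $x+y+u+v=a$, $x+z+u+v=a$, $y+z+u+v=a$, hence $x=y=z=:c$. Using $UV=\mathsf X\mathsf Y\mathsf Z$, we can reduce to monomials $U^{u}V^{v}$ with $u+v=a$, absorbing $(\mathsf X\mathsf Y\mathsf Z)^c$ into $(UV)^c$. Thus the relevant space is spanned by the kernels $D_{123}^3U^{a-k}V^{k}$. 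One point needs care: the total degree in $(x_i,y_i)$ must be at most $a$, with the homogenizing coordinate $1$ absorbing the remainder. This is automatic once $w_i=x_ip_i+y_iq_i$ is substituted. Conversely, each such kernel lies in $\operatorname{Sym}^3(W_a)$ after applying $\Phi_3$.

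The third step is to impose antisymmetry. A transposition of two labels reverses the orientation of the 3-cycles, so it exchanges $U$ and $V$, and it changes the sign of $D_{123}^3$. Since Theorem~\ref{all_degree3} asks for $Q$ antisymmetric, the condition is that $\sum_k c_kU^{a-k}V^k$ changes sign under $U\leftrightarrow V$; the even permutations, which are 3-cycles of the labels, fix $U$, $V$, and $D_{123}$. Solving this, the antisymmetric subspace is spanned by $U^{a-k}V^k-U^kV^{a-k}$ with $k<a/2$, that is, $k=0,\dots,\lfloor (a-1)/2\rfloor$. The middle term $k=a/2$, present when $a$ is even, cancels.

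The main obstacle is linear independence of the $J_{a,k}$, since the monomials $U^{a-k}V^k$ might become dependent after substitution of $w_i$ or after applying $\Phi_3$. Independence under $\Phi_3$ reduces, through $\operatorname{Pol}_3$ and the isomorphism $\Phi_3^{-1}=\operatorname{Pol}_3$ of the main theorem, to independence of the symmetric kernels themselves. For that we would exhibit distinguishing monomials. Set $x_i=y_i=0$ except at one chosen point, or more simply specialize the jets so that $C_{23}=C_{31}=1$ and $C_{12}=s$, $C_{13}=C_{32}=C_{21}=1$ are independent parameters. Then $U^{a-k}V^k$ becomes $s^{a-k}$, with the $C$'s treated as generic: on first jets the six $C_{ij}$ are algebraically independent together with $D_{123}$, which can be checked at one numerical point via a Jacobian rank. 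Distinct exponents of $s$ then give independence.
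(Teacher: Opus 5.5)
Your proposal is correct and follows the paper's proof essentially step for step: restrict to derivative tridegree $(a+3,a+3,a+3)$, use the balance $r_i=s_i=a$ to force $\rho_1=\rho_2=\rho_3$, absorb $\mathsf X^\rho\mathsf Y^\rho\mathsf Z^\rho$ into $(UV)^\rho$, and antisymmetrize under $U\leftrightarrow V$. One small correction: the bound $\alpha+\beta\le a$ holds in the formal expansion with $w_i$ kept independent, not after substituting $w_i=x_ip_i+y_iq_i$. Your extra argument that $D_{123}$ and the six $C_{ij}$ are algebraically independent on first jets is sound. It makes the kernels $D_{123}^{\,3}(U^{a-k}V^k-U^kV^{a-k})$ linearly independent, and hence, by injectivity of $\Phi_3$ on symmetric kernels, the $J_{a,k}$ as well, which fills in a point the paper passes over when it simply calls these a basis.
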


The simplest case, $a=1$, $k=0$, gives
\begin{equation}
\label{eq:J10}
J_{1,0}
=
I_5^{(3)}
=
\Phi_3\left[
D_{123}^{\,3}
\left(
C_{12}C_{23}C_{31}-C_{13}C_{32}C_{21}
\right)
\right].
\end{equation}
Its displayed $D$-moment representation has order
$5$ and contains $342$ monomials. Other invariants of this family and
constructions from mixed homogeneous classes
are considered in Section~\ref{low_order_invariants}.

\subsection{General Moment Correspondence}
\label{subsec:higher_degree_results}

This construction extends to an arbitrary moment
degree $n\geq1$.

A polynomial $Q(z_1,\ldots,z_n)$ is called
a \textit{multipoint relative projective
differential invariant of weight $-1$} if
the identity
\begin{equation}
\label{eq:weight_minus_one_simple}
Q(gz_1,\ldots,gz_n)
=
\left(
\prod_{i=1}^{n}j_g(x_i,y_i)
\right)^{-1}
Q(z_1,\ldots,z_n),
\end{equation}
holds for every projective transformation $g$,
where $j_g(x_i,y_i)$ is the signed Jacobian of the projective
transformation at the point $(x_i,y_i)$.

The following main assertion holds.

\begin{theorem}
\label{thm:general_main_result}
Let $n\geq1$, and let $Q(z_1,\ldots,z_n)$ be a symmetric relative
projective invariant of weight $-1$. Then
$
\Phi_n(Q)
$
is a projective differential moment invariant
of moment degree $n$.
Conversely, every projective differential moment
invariant $I$ of moment degree $n$ can be written as
$
I=\Phi_n(Q),
$
where $Q$ is a symmetric relative projective invariant
of weight $-1$ in $n$ sets of variables.
\end{theorem}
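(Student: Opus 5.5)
The plan is to reduce everything to an infinitesimal statement on polynomials in one or several first jets.

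\emph{Setup.} Write $\mathcal{DP}_1$ for the span of the moment symbols $m_{\alpha,\beta,r,s}$. Then $\mathcal{DP}_n$, the component of moment degree $n$, is spanned by products of $n$ such symbols. The linear map
$$
\Phi_1:\mathbb R[x,y,p,q]\to\mathcal{DP}_1,\qquad x^\alpha y^\beta p^rq^s\mapsto m_{\alpha,\beta,r,s},
$$
is an isomorphism. Hence $\Phi_n$ identifies $\mathbb R[z_1,\ldots,z_n]^{S_n}$ with $\operatorname{Sym}^n(\mathcal{DP}_1)=\mathcal{DP}_n$. The inverse of this identification is complete polarization $\operatorname{Pol}_n$, which sends $M_1\cdots M_n$ to the symmetrization of $\Phi_1^{-1}(M_1)(z_1)\cdots\Phi_1^{-1}(M_n)(z_n)$. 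So $\Phi_n$ restricted to symmetric kernels is a linear bijection onto $\mathcal{DP}_n$. It remains to show that it carries the weight $-1$ condition exactly onto annihilation by $\mathfrak{sl}_3$.

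\emph{Single-point computation.} For each generator $\xi$ of $\mathfrak{sl}_3$, I take the prolonged vector field $\mathrm{pr}\,\xi$ on $(x,y,p,q)$. I then differentiate $\int b(x,y,p,q)f\,dx\,dy$ along the one-parameter group $g_t=\exp(t\xi)$, using the change of variables $x\mapsto g_tx$. This produces the Jacobian factor $j_{g_t}$, whose derivative at $t=0$ is $\operatorname{div}\xi$. The result is the formula
$$
\frac{d}{dt}\Big|_{t=0}\int b\circ g_t\cdot j_{g_t}\, f=\int\big(\mathrm{pr}\,\xi(b)+\operatorname{div}\xi\cdot b\big)f.
$$
Since $\xi$ has quadratic coefficients, $\mathrm{pr}\,\xi+\operatorname{div}\xi$ maps polynomials to polynomials. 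This is presumably how the paper defines the derivation action of $\mathfrak{sl}_3$ on $\mathcal{DP}$: on generators it acts by $m\mapsto\Phi_1((\mathrm{pr}\,\xi+\operatorname{div}\xi)\Phi_1^{-1}m)$, extended to $\mathcal{DP}$ by the Leibniz rule. I would state this as the definition, or recall it from Section~\ref{sec:projective-moment-invariants}.

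\emph{Transport to $n$ points.} By the Leibniz rule and multiplicativity of $\Phi_n$, we get
$$
\xi\cdot\Phi_n(Q)=\Phi_n\Big(\sum_{i=1}^n\big(\mathrm{pr}\,\xi^{(i)}+\operatorname{div}\xi(x_i,y_i)\big)Q\Big),
$$
where $\xi^{(i)}$ denotes $\xi$ acting on the $i$-th set of variables. This operator commutes with $S_n$, so it preserves symmetric kernels, and $\Phi_n$ is injective on those. Therefore $\Phi_n(Q)$ is invariant if and only if
$$
\sum_i\big(\mathrm{pr}\,\xi^{(i)}+\operatorname{div}\xi_i\big)Q=0\quad\text{for all }\xi.
$$
This is precisely the infinitesimal form of \eqref{eq:weight_minus_one_simple}, obtained by differentiating at $g=e$ using $\frac{d}{dt}j_{g_t}=\operatorname{div}\xi$.

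\emph{Main obstacle: integrating the infinitesimal condition.} The converse direction needs the infinitesimal condition to imply the group identity \eqref{eq:weight_minus_one_simple} for all $g$, not just near $e$.

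Near the identity, I would integrate along one-parameter subgroups. The function
$$
h(t)=Q(g_tz)\prod_i j_{g_t}(x_i,y_i)
$$
satisfies $h'=0$, by the cocycle property of the Jacobian. This yields the identity on the connected group generated by exponentials, which is all of $SL(3,\mathbb R)\cong PGL(3,\mathbb R)$ since it is connected.

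The identity holds as rational functions wherever $g$ is defined. Conversely, for the direct direction, differentiating \eqref{eq:weight_minus_one_simple} gives the infinitesimal condition, and then the computation above shows $\xi\cdot\Phi_n(Q)=0$.

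Finally, for an arbitrary invariant $I$, I would set $Q=\operatorname{Pol}_n(I)$. This $Q$ is symmetric with $\Phi_n(Q)=I$, and the equivalence above shows it is a relative invariant of weight $-1$.
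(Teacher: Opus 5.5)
Your proposal is correct and follows essentially the same route as the paper, which likewise defines the derivation action through $\mathscr L_X=X^{(1)}+\operatorname{div}X$, shows that $\Phi_n$ and $\operatorname{Pol}_n$ are mutually inverse on symmetric kernels and intertwine $\mathscr D_k$ with $\mathscr L_k^{[n]}$ (Lemma~\ref{lem:polarization-identities}), characterizes $\mathcal P_{n,-1}$ by $\mathscr L_k^{[n]}F=0$ via the same one-parameter-subgroup integration and connectedness argument, and takes $Q=\operatorname{Pol}_n(I)$ for the converse. The only cosmetic difference is that you get the converse from injectivity of $\Phi_n$ on symmetric kernels together with $S_n$-equivariance of the diagonal operator, whereas the paper phrases the same step as the identity $\operatorname{Pol}_n(\mathscr D_kI)=\mathscr L_k^{[n]}\operatorname{Pol}_n(I)$.
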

This theorem establishes a connection between projective
moment invariants and symmetric multipoint
relative invariants of weight $-1$. A description of such
relative invariants in terms of admissible monomials in
$D_{ijk}$ and $C_{ij}$ is given in
Section~\ref{sec:preliminaries}, while the proof of the
theorem itself is given in Section~\ref{sec:projective-moment-invariants}.

For an admissible pair $(g,f)$, let
$\varepsilon_g=\operatorname{sgn}j_g(x,y)$
denote the constant sign of the Jacobian on a connected domain containing
the support of $f$. Then, for every
$I\in\mathcal{MI}_n$, we have
$$
I(g\cdot f)=\varepsilon_g^{\,n}I(f).
$$
Indeed, let $I=\Phi_n(Q)$, where $Q$ is a symmetric
relative invariant of weight $-1$.
A change of variables in the multiple integral and the
relative invariance law give
$$
\begin{aligned}
I(g\cdot f)
&=
\int_{\Omega^n}
Q(gz_1,\ldots,gz_n)
\prod_{i=1}^n
\bigl(|j_g(x_i,y_i)|f(x_i,y_i)\,dx_i\,dy_i\bigr)
\\
&=
\int_{\Omega^n}
Q(z_1,\ldots,z_n)
\prod_{i=1}^n
\left(
\frac{|j_g(x_i,y_i)|}{j_g(x_i,y_i)}
f(x_i,y_i)\,dx_i\,dy_i
\right)
=
\varepsilon_g^{\,n}I(f).
\end{aligned}
$$
Thus, invariants of even moment degree
are preserved under all admissible transformations,
while invariants of odd degree change sign under orientation reversal
and are pseudoinvariants.

In addition to the correspondence theorem, two
structural properties of moment invariants are established.
First, the space $\mathcal{MI}_3$ is
infinite-dimensional, and the algebra $\mathcal{MI}$
is not finitely generated. Thus,
finite generation of each module
$\mathcal P_{n,-1}$ over the algebra of absolute
invariants $\mathcal A_n$ for fixed
$n\geq3$ does not imply finite generation
of the algebra of all moment invariants.

Second, every rational moment
invariant can be written in the form
$$
\frac{P}{Q},
\qquad
P,Q\in\mathcal{MI},\quad Q\neq0.
$$
Thus, the field of rational moment invariants
is the field of fractions of the algebra $\mathcal{MI}$.
Proofs of both results are given
in Section~\ref{low_order_invariants}.

\section{Preliminaries}
\label{sec:preliminaries}


In this section, we introduce the projective group action on the image
coordinates and its first derivatives, the basic
constructions $D_{ijk}$ and $C_{ij}$, and their transformation
laws, which will subsequently be used to construct
projective differential moment invariants.

\subsection{The projective group and its action}

A projective transformation of the plane is determined, up to
a nonzero scalar factor, by a real nonsingular matrix
$$
H=
\begin{pmatrix}
a_1&a_2&a_3\\
b_1&b_2&b_3\\
c_1&c_2&c_3
\end{pmatrix}
\in GL(3,\mathbb R).
$$
Let $g=[H]\in PGL(3,\mathbb R)$ denote the corresponding equivalence class
of the matrix. In an affine chart, the coordinate map $h_g$ takes the form
$$
h_g(x,y)=(\widetilde x,\widetilde y)=
\left(
\frac{a_1x+a_2y+a_3}{s},
\frac{b_1x+b_2y+b_3}{s}
\right),
\qquad
s=c_1x+c_2y+c_3,
$$
and is defined whenever $s\neq0$. Its signed Jacobian is
$$j_g(x,y)=\det Dh_g(x,y)=\frac{\det H}{s^3}. $$ All such transformations form a group called
the \textit{projective group of the plane},
$PGL(3,\mathbb R)$. Following the convention adopted in the introduction,
we identify this group with $SL(3,\mathbb R)$
and its Lie algebra with $\mathfrak{sl}_3$.

In applications, the normalization $c_3=1$ is often used.
Since it is not available for every transformation, and the corresponding
matrix representatives are not closed under multiplication, we do not
impose it hereafter.

\medskip

We describe an image by a real-valued intensity function
$f=f(x,y)$. We assume that it
is twice differentiable on its domain
and vanishes outside it.
The support of the function, that is, the closure of the set of points where
$f(x,y)\neq0$, will be denoted by $\Omega$.

A projective transformation cannot be applied to every image: its denominator $s=c_1x+c_2y+c_3$
may vanish on the support of the function $f$.
A pair $(g,f)$ consisting of a projective transformation $g$ and an image $f$ to which it is applied
will be called \textit{admissible} if there exists an open connected domain $U$ containing the entire support $\Omega$ of the image and on which $s\neq0$.
Under this condition, the transformation is defined throughout $U$, and its Jacobian has a constant nonzero sign there.
Thus, we distinguish the signed Jacobian in the transformation laws
of algebraic kernels from its absolute value in the
change-of-variables formula.

For an admissible pair $(g,f)$, we define the action of the projective
transformation on the image by
$$
(g\cdot f)\bigl(h_g(x,y)\bigr)=f(x,y),
\qquad (x,y)\in U.
$$
Thus, on $h_g(U)$, we have
$$
g\cdot f=f\circ h_g^{-1},
$$
and outside this domain, we extend the function $g\cdot f$
by zero.

The transformation law of a moment invariant
under the projective group action depends on
its moment degree and the sign of the Jacobian,
as specified by the following assertion.

\begin{theorem}
Let $(g,f)$ be an admissible pair, and let
$$
\varepsilon_g=\operatorname{sgn}j_g(x,y),
\qquad (x,y)\in U,
$$
be the constant sign of the Jacobian on the domain $U$.
Then, for every $I\in\mathcal{MI}_n$, we have
$$
I(g\cdot f)=\varepsilon_g^{\,n}I(f).
$$
\end{theorem}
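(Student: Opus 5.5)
The plan is to reduce the statement to a change of variables in a multiple integral, using the correspondence theorem (Theorem~\ref{thm:general_main_result}) to replace $I$ by a multipoint kernel. By that theorem, every $I\in\mathcal{MI}_n$ can be written as $I=\Phi_n(Q)$, where $Q(z_1,\ldots,z_n)$ is a symmetric relative invariant of weight $-1$. The integral realization of $\Phi_n$ sends the monomial $\prod_\nu x_\nu^{\alpha_\nu}y_\nu^{\beta_\nu}p_\nu^{r_\nu}q_\nu^{s_\nu}$ to a product of $n$ single-point integrals. By Fubini this product equals
$$
I(f)=\int_{\Omega^n}Q(z_1,\ldots,z_n)\prod_{i=1}^n f(x_i,y_i)\,dx_i\,dy_i,
\qquad z_i=(x_i,y_i,f_x(x_i,y_i),f_y(x_i,y_i)).
$$
Linearity then extends this identity from monomials to the whole kernel $Q$. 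Since $f$ has compact support and $Q$ is polynomial, all the integrals are finite.

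Next I compute $I(g\cdot f)$ with the same formula applied to $\tilde f=g\cdot f$, which is supported in $h_g(\Omega)$. Substitute $(\tilde x_i,\tilde y_i)=h_g(x_i,y_i)$ in each factor, with $(x_i,y_i)$ ranging over $U\supset\Omega$. Three facts are used here.
\begin{itemize}
\item By definition $\tilde f(h_g(x,y))=f(x,y)$.
\item The first jet of $\tilde f$ at $h_g(x,y)$ is exactly the prolonged projective action on $(x,y,f_x,f_y)$, so $\tilde z_i=g z_i$. This is the action in whose terms the relative invariance law \eqref{eq:weight_minus_one_simple} is formulated.
\item The change-of-variables formula gives $d\tilde x_i\,d\tilde y_i=|j_g(x_i,y_i)|\,dx_i\,dy_i$.
\end{itemize}
It follows that
$$
I(g\cdot f)=\int_{\Omega^n}Q(gz_1,\ldots,gz_n)\prod_{i=1}^n |j_g(x_i,y_i)|\,f(x_i,y_i)\,dx_i\,dy_i .
$$

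Applying \eqref{eq:weight_minus_one_simple} replaces $Q(gz_1,\ldots,gz_n)$ by $\prod_i j_g(x_i,y_i)^{-1}Q(z_1,\ldots,z_n)$. The integrand then contains the factor $\prod_i |j_g(x_i,y_i)|/j_g(x_i,y_i)$. Admissibility is used at this point: $U$ is connected and $s\ne0$ on $U$, and $j_g=\det H/s^3$. Hence $j_g$ has constant sign $\varepsilon_g$ on $U$, so each ratio equals $\varepsilon_g$. Pulling out $\varepsilon_g^n$ leaves exactly $I(f)$.

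The step I expect to need the most care is the claim that the jet of $g\cdot f$ transforms by the same prolonged action $z\mapsto gz$ used in the definition of relative invariants. Equivalently, the formal $\mathfrak{sl}_3$-action on $\mathcal{DP}$ must integrate, for admissible $g$, to this geometric action. I would verify it by the chain rule, differentiating $\tilde f\circ h_g=f$, which gives $(f_x,f_y)=(\tilde f_{\tilde x},\tilde f_{\tilde y})\,Dh_g$. This is the standard contragredient law used in Section~\ref{sec:preliminaries}. A secondary point is that the integral realization of $\Phi_n(Q)$ is well defined even though the representative $Q$ of $I$ need not be unique. This is harmless: $I(f)$ is defined from $I$ itself, and any representative produces the same product of moment integrals.
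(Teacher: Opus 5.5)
Your proposal is correct and follows essentially the same route as the paper. Both arguments write $I=\Phi_n(Q)$ with $Q$ a symmetric weight $-1$ relative invariant, realize $\Phi_n(Q)$ as an integral over $\Omega^n$, change variables via $h_g$ (using $\tilde z_i=gz_i$ and the factor $|j_g|$), and apply the relative invariance law so that each ratio $|j_g|/j_g$ becomes $\varepsilon_g$; the extra points you flag, the transformation law of the jet and the independence from the choice of kernel $Q$, are details the paper leaves implicit, and you handle them correctly.
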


\begin{proof}
Let $I=\Phi_n(Q)$, where $Q$ is a symmetric
relative invariant of weight $-1$.
A change of variables in the multiple integral and the
relative invariance law give
$$
\begin{aligned}
I(g\cdot f)
&=
\int_{\Omega^n}
Q(gz_1,\ldots,gz_n)
\prod_{i=1}^n
\bigl(
|j_g(x_i,y_i)|f(x_i,y_i)\,dx_i\,dy_i
\bigr)=
\\
&=
\int_{\Omega^n}
Q(z_1,\ldots,z_n)
\prod_{i=1}^n
\left(
\frac{|j_g(x_i,y_i)|}{j_g(x_i,y_i)}
f(x_i,y_i)\,dx_i\,dy_i
\right)
=\varepsilon_g^{\,n}I(f).
\end{aligned}
$$
\end{proof}

Thus, invariants of even moment degree
are preserved under all admissible transformations,
while invariants of odd moment degree
change sign under orientation reversal
and are pseudoinvariants.

The projective group action is naturally prolonged to first derivatives:
\begin{equation}\label{pq}
\begin{gathered}
p\longmapsto
\widetilde p
=
\frac{s}{\det H}
\Bigl[
-(b_1c_2-b_2c_1)(px+qy)
+(b_2c_3-b_3c_2)p
-(b_1c_3-b_3c_1)q
\Bigr],
\\
q\longmapsto
\widetilde q
=
\frac{s}{\det H}
\Bigl[
(a_1c_2-a_2c_1)(px+qy)
-(a_2c_3-a_3c_2)p
+(a_1c_3-a_3c_1)q
\Bigr],
\end{gathered}
\end{equation}
see, for example,~\cite{B-2025-1}.
\subsection{Diagonal action}

Consider $n$ independent sets of variables
$z_i=(x_i,y_i,p_i,q_i)$, $i=1,\ldots,n$, and denote the corresponding partial derivatives by $p_i=f_x(x_i,y_i)$ and $q_i=f_y(x_i,y_i)$.

The diagonal action means that the same element $g=[H]$
acts on each set $z_i$. In particular,
$$
(x_i,y_i)\longmapsto
(\widetilde x_i,\widetilde y_i)=
\left(
\frac{a_1x_i+a_2y_i+a_3}{s_i},
\frac{b_1x_i+b_2y_i+b_3}{s_i}
\right),
\qquad
s_i=c_1x_i+c_2y_i+c_3,
$$
and the formulas for $p_i,q_i$ are obtained from~\eqref{pq} by replacing
$(x,y,p,q,s)$ with $(x_i,y_i,p_i,q_i,s_i)$.

We denote the coordinate Jacobian multiplier of the diagonal action
by
$$
\mathcal J_g(\boldsymbol z)
=
\prod_{i=1}^{n}j_g(x_i,y_i)
=
\frac{(\det H)^n}{\prod_{i=1}^{n}s_i^3},
\qquad
\boldsymbol z=(z_1,\ldots,z_n).
$$
It accounts for the change of coordinates $(x_i,y_i)$
and is used in the relative invariance laws.
The change-of-variables formula for integrals
uses its absolute value
$$
|\mathcal J_g(\boldsymbol z)|
=
\prod_{i=1}^{n}|j_g(x_i,y_i)|.
$$
For the prolonged action on all $4n$ variables
$z_i$, the determinant of the Jacobian matrix
equals $1$.

\subsection{Relative and absolute invariants}

Let $R_n=\mathbb R[\boldsymbol z]$ denote the polynomial
ring, and let $K_n=\operatorname{Frac}(R_n)$ denote its
field of fractions. Projective substitutions are rational:
they are defined in $K_n$ but generally do not preserve $R_n$.
A rational function $F\in K_n$ is called
a \textit{multipoint relative projective
differential invariant of weight $\omega$} if the identity
$$
F(g\boldsymbol z)
=
\mathcal J_g(\boldsymbol z)^\omega F(\boldsymbol z).
$$
holds for all $g\in PGL(3,\mathbb R)$.
Hereafter, it suffices to consider integer weights
$\omega\in\mathbb Z$, in particular $\omega=0$ and $\omega=-1$.
Invariants of weight $0$ are called \textit{absolute}.

We denote by $\mathcal P_{n,\omega}\subset R_n$
the vector space of polynomial relative invariants of weight $\omega$,
and by $\mathcal A_n=\mathcal P_{n,0}$ the algebra
of absolute polynomial invariants. For each $\omega$,
the space $\mathcal P_{n,\omega}$ is a module over $\mathcal A_n$.

Polynomial absolute invariants and modules of relative
invariants were studied in~\cite{B-2026}. We recall the results
needed to describe $\mathcal P_{n,-1}$.
The determinants $D_{ijk}$ and the quantities $C_{ij}$,
introduced in Subsection~\ref{subsec:degree3_results},
satisfy the transformation laws
$$
D_{ijk}\longmapsto
\frac{s_is_js_k}{\det H}D_{ijk},
\qquad
C_{ij}\longmapsto\frac{s_i}{s_j}C_{ij}.
$$

For a cycle $\gamma=(i_1\,i_2\,\ldots\,i_r)$,
$r\geq2$, in the symmetric group $S_n$, define the cyclic invariant
$$
Z_\gamma
=
C_{i_1i_2}C_{i_2i_3}\cdots
C_{i_{r-1}i_r}C_{i_ri_1}.
$$

\begin{theorem}[\cite{B-2026}, Theorem 2.2]
For every $n\geq3$, the algebra $\mathcal A_n$ is generated
by cyclic invariants:
$$
\mathcal A_n
=
\mathbb R\bigl[
Z_\gamma
\mid
\gamma\in S_n
\text{ --- a cycle of length }r\geq2
\bigr].
$$
\end{theorem}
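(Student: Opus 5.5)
The statement is Theorem 2.2 of \cite{B-2026}, which describes the algebra $\mathcal A_n$ of absolute polynomial invariants; here I sketch how I would prove it independently. The inclusion of the right-hand side in $\mathcal A_n$ is immediate from the transformation law $C_{ij}\mapsto (s_i/s_j)C_{ij}$. For a cycle $\gamma=(i_1\,\ldots\,i_r)$ the factors telescope:
$$\prod_{t}\frac{s_{i_t}}{s_{i_{t+1}}}=1 .$$
Hence every $Z_\gamma$ is an absolute invariant, and so is every polynomial in the $Z_\gamma$. The content of the theorem is the reverse inclusion.

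\textbf{Step 1: reduce to monomials in the $C_{ij}$.} I would pass to homogeneous coordinates. With $v_i=(x_i,y_i,1)$ and the covector $\ell_i=(p_i,q_i,-w_i)$, the projective action becomes the linear action $v_i\mapsto Hv_i$ and $\ell_i\mapsto \ell_iH^{-1}$, rescaled by the factors $s_i$. The scaling weights force a normalization: $v_i$ is only defined up to scale, and $\ell_i$ carries the weight that compensates for it. By the first fundamental theorem for $SL(3)$ acting on vectors and covectors, polynomial invariants of the linear action are generated by three kinds of brackets:
\begin{itemize}
\item $[v_iv_jv_k]$, the determinants of three vectors;
\item $[\ell_i\ell_j\ell_k]=D_{ijk}$, the determinants of three covectors;
\item the pairings $\ell_i\cdot v_j=C_{ij}$.
\end{itemize}
Since $\ell_i\cdot v_i=0$, the diagonal pairings vanish. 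An absolute invariant must have total weight zero in each $s_i$. Each $D_{ijk}$ carries positive $s$-weight together with a $\det H$ factor, and each $[v_iv_jv_k]$ carries the opposite weight. So weight balance forces these to appear in combinations, and the classical relation
$$[v_1v_2v_3]\,[\ell_1\ell_2\ell_3]=\det(\ell_i\cdot v_j)$$
expresses such products through the $C_{ij}$. After this reduction, every absolute invariant is a polynomial in the $C_{ij}$ with $i\ne j$.

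\textbf{Step 2: the combinatorial step.} A monomial $\prod C_{ij}^{e_{ij}}$ is invariant exactly when, at every vertex $i$, the out-degree $\sum_j e_{ij}$ equals the in-degree $\sum_j e_{ji}$. Such a monomial is an Eulerian multigraph. Every Eulerian multigraph decomposes into edge-disjoint directed cycles, so the monomial factors as a product of the $Z_\gamma$. Because the group action is linear on each homogeneous weight component, an invariant polynomial is a sum of invariant monomials.

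\textbf{Main obstacle.} The hard part is Step 1, because the transformation law is rational rather than polynomial, so the first fundamental theorem does not apply verbatim. To handle it I would homogenize: introduce $t_i$ with $v_i=(x_i,y_i,t_i)$ and correspondingly scaled $\ell_i$, and show that $R_n$ embeds into the invariants of $SL(3)\times(\mathbb G_m)^n$. The delicate point is that, after restricting to $t_i=1$, polynomials in the brackets together with the weight condition yield exactly polynomials in the $C_{ij}$, with no denominators appearing. This requires checking that the restriction commutes with taking invariants, and it is where the $\ell_i\cdot v_i=0$ relation must be tracked carefully.
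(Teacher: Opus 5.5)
The paper gives no proof of this statement; it quotes it from \cite{B-2026}, so your sketch has to stand on its own. The inclusion $\mathbb R[Z_\gamma]\subset\mathcal A_n$ is fine. The overall architecture is also sound: homogenize, apply the first fundamental theorem for $SL_3$ on vectors and covectors, use weight zero to pair each $[v_iv_jv_k]$ with some $D_{abc}$ via $[v_iv_jv_k]\,D_{abc}=\det(\ell_\alpha\cdot v_\beta)$, drop the loops $\ell_i\cdot v_i=0$, and split Eulerian multigraphs into cycles.

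\textbf{The gap.} The step you call the main obstacle is the entire content of Step~1, and you give no argument for it. Write $v_i=(X_i,Y_i,T_i)$ and $\ell_i=(P_i,Q_i,R_i)$ on the incidence variety $M^n=\{\ell_i\cdot v_i=0\}$. An element $F\in\mathcal A_n$ lifts to the torus-invariant function $\hat F=F(X_i/T_i,\,Y_i/T_i,\,P_iT_i,\,Q_iT_i)$. A priori this lies only in $\mathbb C[M^n][T_1^{-1},\dots,T_n^{-1}]$. The first fundamental theorem says nothing about such Laurent functions, so nothing in Step~1 applies until you show that $\hat F$ has no poles along $T_i=0$.

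Your formulation of this point is also off. Restricting bracket polynomials to $t_i=1$ can never create denominators, and $R_n$ does not embed in the $SL_3\times(\mathbb G_m)^n$-invariants. What must be proved is the converse: every element of $\mathcal A_n$ lifts to a \emph{polynomial} invariant.

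\textbf{The missing idea} is that $SL_3$ moves the would-be polar divisor. Complexify first: by Zariski density of $SL_3(\mathbb R)$, $\hat F\circ g=\hat F$ for all $g\in SL_3(\mathbb C)$. Then $\hat F$ is regular on $g^{-1}\{T_1\cdots T_n\neq0\}$ for every $g$. Since finitely many nonzero vectors all avoid some plane through the origin, $\hat F$ is regular on $\{v_i\neq0\ \forall i\}$. The complement $\bigcup_i\{v_i=0\}$ has codimension $2$ in $M^n$. Moreover $M$ is the rank-$6$ quadric cone $PX+QY+RT=0$, which is normal, and so is $M^n$. By algebraic Hartogs, $\hat F\in\mathbb C[M^n]$.

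Only then do your remaining steps go through. By reductivity, the invariant $\hat F$ on the closed subvariety $M^n$ lifts to an $SL_3$-invariant on $V^n\oplus V^{*n}$. Its projection to torus weight $0$ still restricts to $\hat F$ and is a combination of weight-zero bracket monomials.

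This projection is also what Step~2 actually needs. On the affine slice, for $n\geq4$ the $C_{ij}$ satisfy polynomial relations, since the matrix $(\ell_i\cdot v_j)$ has rank at most $3$. The multipliers $s_i=c\cdot v_i$ are also linearly dependent as linear forms in $c$. So ``an invariant polynomial is a sum of invariant monomials'' is not justified by the law $C_{ij}\mapsto(s_i/s_j)C_{ij}$ alone. It holds upstairs, where the $n$ torus factors act independently.
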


To describe $\mathcal P_{n,-1}$, consider the set
$\mathcal T_n=\{I\subset[n]\mid |I|=3\}$, where
$[n]=\{1,\ldots,n\}$, and, for $I=\{i,j,k\}$ with $i<j<k$,
set $D_I=D_{ijk}$.

Consider a vector of length $\binom{n}{3}$
$$
\boldsymbol b=(b_I)_{I\in\mathcal T_n},
\qquad
b_I\in\mathbb Z_{\geq0},
$$
and a nonnegative integer $n\times n$ matrix with zero diagonal
$$
\boldsymbol a=(a_{ij}),
\qquad
a_{ij}\in\mathbb Z_{\geq0}.
$$

Define the monomial
\begin{equation}
\label{Pba}
P_{\boldsymbol b,\boldsymbol a}
=
\prod_{I\in\mathcal T_n}D_I^{b_I}
\prod_{i\neq j}C_{ij}^{a_{ij}}.
\end{equation}
Its $C$-degree is defined as the number $\sum_{i\neq j}a_{ij}$.
For each $i$, set
$$
d_{\boldsymbol b}(i)=\sum_{I\ni i}b_I,
\qquad
\partial_{\boldsymbol a}(i)
=
\sum_{j\neq i}a_{ij}-\sum_{j\neq i}a_{ji}.
$$
The first quantity accounts for the total multiplicity of occurrences of the index
$i$ in the determinants, while the second gives the balance of its occurrences
in the factors $C_{ij}$.
The monomial~\eqref{Pba} is called \textit{admissible} if
\begin{equation}
\label{admissibility_conditions}
\sum_{I\in\mathcal T_n}b_I=n,
\qquad
d_{\boldsymbol b}(i)+\partial_{\boldsymbol a}(i)=3,
\qquad i=1,\ldots,n.
\end{equation}

The following structural result holds.
\begin{theorem}[\cite{B-2026}, Theorem 3.2]
For every $n\geq3$, the vector space
$\mathcal P_{n,-1}$ is spanned by the admissible monomials
$P_{\boldsymbol b,\boldsymbol a}$.
\end{theorem}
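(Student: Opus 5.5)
The plan is to prove the two inclusions separately. First, every admissible monomial lies in $\mathcal P_{n,-1}$. Second, and this is the substantial part, every element of $\mathcal P_{n,-1}$ is a linear combination of admissible monomials. Throughout I would work with the vectors $v_i=(x_i,y_i,1)$ and $\ell_i=(p_i,q_i,-w_i)$ introduced in Subsection~\ref{subsec:degree3_results}. Under $g=[H]$, $v_i$ transforms as a vector up to the scalar $1/s_i$, and $\ell_i$ transforms as a covector up to $s_i/\det H$. These two facts are exactly what yield the laws $D_{ijk}\mapsto \frac{s_is_js_k}{\det H}D_{ijk}$ and $C_{ij}\mapsto \frac{s_i}{s_j}C_{ij}$.

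\emph{Step 1: admissible monomials are relative invariants.} Apply these laws to $P_{\boldsymbol b,\boldsymbol a}$. The factor collected is
$$
(\det H)^{-\sum_I b_I}\prod_i s_i^{\,d_{\boldsymbol b}(i)+\partial_{\boldsymbol a}(i)} .
$$
By~\eqref{admissibility_conditions} this equals $(\det H)^{-n}\prod_i s_i^3=\mathcal J_g(\boldsymbol z)^{-1}$. Hence $P_{\boldsymbol b,\boldsymbol a}\in\mathcal P_{n,-1}$.

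\emph{Step 2: passing to a linear $SL(3)$ problem.} Let $Q\in\mathcal P_{n,-1}$. The coordinates $p_i,q_i$ are the first two components of $\ell_i$, and $w_i=x_ip_i+y_iq_i$ is forced by the incidence $\ell_i\cdot v_i=0$. So $Q$ can be regarded as a function on pairs (point $v_i$ in the chart $t_i=1$, covector $\ell_i$ incident to it). I would homogenize in each $v_i$ by an auxiliary coordinate $t_i$, and use the grading in $\ell_i$. This produces a polynomial $\widehat Q(v_1,\dots,v_n;\ell_1,\dots,\ell_n)$ on the incidence variety $\{\ell_i\cdot v_i=0\}$. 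The relative invariance law, with its specific powers of $s_i$ and $\det H$, translates into two conditions on $\widehat Q$: it is $SL(3)$-invariant under the simultaneous linear action, and it is multihomogeneous, with the difference of the $\ell_i$-degree and the $v_i$-degree equal to $3$ for each $i$. Only the $(\det H)$-weight remains, and it is absorbed by the scalar normalization $PGL\simeq SL$.

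\emph{Step 3: First Fundamental Theorem.} By the FFT for $SL(3)$ acting on vectors and covectors, invariants are polynomials in three kinds of generators: the brackets $[v_iv_jv_k]$, the brackets $[\ell_i\ell_j\ell_k]=D_{ijk}$, and the pairings $\ell_i\cdot v_j=C_{ij}$ (note $C_{ii}=0$ on the incidence variety). It must be checked that this description descends to the quotient by the ideal of the incidence relations. I expect this to work because the incidence ideal is generated by the invariants $C_{ii}$.

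Any monomial containing a $v$-bracket is handled with the classical identity
$$
[v_iv_jv_k][\ell_a\ell_b\ell_c]=\det(\ell_\alpha\cdot v_\beta),
$$
which rewrites each $v$-bracket paired with a $D$ as a polynomial in the $C$'s. The degree count shows enough $D$'s are available: the total $\ell$-degree minus the total $v$-degree is $3n$. So in each monomial the number of $D$'s minus the number of $v$-brackets is $n$, and $v$-brackets can be eliminated in favour of pure $D$–$C$ monomials with exactly $n$ determinants. The per-point condition then reads $d_{\boldsymbol b}(i)+\partial_{\boldsymbol a}(i)=3$, which is admissibility.

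The main obstacle is Step 2. One must justify that a polynomial $Q$ in the affine jet variables genuinely extends to a polynomial, not merely rational, invariant on the homogenized incidence variety. This includes showing that no negative powers of $t_i$ appear, and that invariance under the Lie algebra or the identity component suffices. I would first try to establish this by checking invariance infinitesimally via the $\mathfrak{sl}_3$ vector fields. I would then use the fact that the affine chart is dense, together with multihomogeneity, to clear the $t_i$ denominators.
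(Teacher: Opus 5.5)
The paper gives no proof of this statement. It imports it from \cite{B-2026}, which, according to the introduction, uses classical $SL(3,\mathbb R)$-invariant theory, so your route is the natural one. Step~1 is correct. Step~3 is sound once its input is secured: Weyl's first fundamental theorem for $SL_3$ on $n$ vectors and $n$ covectors, the identity $[v_iv_jv_k][\ell_a\ell_b\ell_c]=\det(\ell_\alpha\cdot v_\beta)$, the count (number of $D$'s minus number of $v$-brackets equals $n$), and the vanishing of $C_{ii}$ on the incidence variety together give exactly the conditions~\eqref{admissibility_conditions}. For the descent through the incidence ideal, what makes it work is linear reductivity: apply the Reynolds operator of $SL_3(\mathbb C)$ to an arbitrary lift. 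The fact that the ideal is generated by the invariants $C_{ii}$ is not the reason.

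The genuine gap is Step~2, which you flag but leave open, and the remedy you sketch would not close it. Split $Q$ into components of fixed $(p_i,q_i)$-multidegree $(e_1,\dots,e_n)$; each component again lies in $\mathcal P_{n,-1}$. Writing $v_i=(X_i,Y_i,t_i)$, invariance forces the lift to be
\[
\widehat Q=\prod_{i=1}^n t_i^{\,e_i-3}\,Q\bigl(X_i/t_i,\,Y_i/t_i,\,\ell_{i1},\,\ell_{i2}\bigr).
\]
A priori this is only a rational function on the incidence variety $\mathcal I^n$, regular where all $t_i\neq0$. Its $v_i$-degree $e_i-3$ lies below the affine $(x_i,y_i)$-degree of $Q$; already $D_{123}^3$ has $v_i$-degree $0$ but degree $3$ in each $(x_i,y_i)$. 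So homogenizing $Q$ and clearing powers of $t_i$ gives $\widehat Q$ times extra powers of $t_i$, which is no longer invariant. Density of the chart and multihomogeneity say nothing about a pole along $t_i=0$, and infinitesimal invariance does not address poles at all.

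The missing idea is to use the group to move the hyperplane $\{t_i=0\}$. Work over $\mathbb C$ and let $B=\mathbb C[\mathcal I^n]$. Invariance under the cyclic permutation matrix $P$, with $Pv=(Y,t,X)$ and $\det P=1$, gives $\widehat Q\in B[1/\prod_it_i]\cap B[1/\prod_iX_i]$. Each $t_i$ is prime in $B$: modulo $t_i$ the quadric $\ell_i\cdot v_i$ becomes $\ell_{i1}X_i+\ell_{i2}Y_i$, which is still irreducible. Also, $t_i$ divides no $X_j$. Writing $\widehat Q=F/\prod_it_i^M=G/\prod_iX_i^M$ with $F,G\in B$, the relation $F\prod_iX_i^M=G\prod_it_i^M$ forces $\prod_it_i^M\mid F$, so $\widehat Q\in B$. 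Equivalently, the $SL_3$-translates of the chart cover $\{v_i\neq0\ \forall i\}$, whose complement has codimension $2$ in the normal variety $\mathcal I^n$. Only after this is $\widehat Q$ a polynomial invariant to which the first fundamental theorem applies. The same argument also shows that components with some $e_i<3$ vanish.
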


Since multiplying a relative invariant of weight $-1$
by an absolute invariant does not change its weight,
the space $\mathcal P_{n,-1}$ is naturally a module over the algebra
$\mathcal A_n$. The following result holds for this module.

\begin{theorem}[\cite{B-2026}, Theorem 4.2]
For every $n\geq3$, the module $\mathcal P_{n,-1}$ is finitely
generated over $\mathcal A_n$. A finite generating system
can be chosen among the admissible monomials.
\end{theorem}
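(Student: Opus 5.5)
The plan is to derive finite generation directly from the combinatorial description of $\mathcal P_{n,-1}$ in \cite[Theorem 3.2]{B-2026}, restated above: the admissible monomials $P_{\boldsymbol b,\boldsymbol a}$ span $\mathcal P_{n,-1}$. It therefore suffices to find a finite set $\mathcal G$ of admissible monomials such that every admissible monomial equals $P\cdot M$ with $P\in\mathcal G$ and $M$ a product of cyclic invariants $Z_\gamma$, hence $M\in\mathcal A_n$. Linear relations among the $D_I$ and $C_{ij}$ do no harm here, since we only need a spanning set over $\mathcal A_n$.

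First, split according to $\boldsymbol b$. The condition $\sum_I b_I=n$ with $b_I\in\mathbb Z_{\geq0}$ allows only finitely many vectors $\boldsymbol b$. Fix one such $\boldsymbol b$. The remaining conditions then form an inhomogeneous linear system in the nonnegative integer matrix $\boldsymbol a$:
$$
\partial_{\boldsymbol a}(i)=3-d_{\boldsymbol b}(i),\qquad i=1,\ldots,n.
$$
Let $S_{\boldsymbol b}\subset\mathbb Z_{\geq0}^{n(n-1)}$ be its solution set, ordered componentwise. By Dickson's lemma, the set of minimal elements of $S_{\boldsymbol b}$ is finite, and every $\boldsymbol a\in S_{\boldsymbol b}$ dominates some minimal $\boldsymbol a_0$. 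The difference $\boldsymbol c=\boldsymbol a-\boldsymbol a_0$ is nonnegative and satisfies the homogeneous system $\partial_{\boldsymbol c}(i)=0$ for all $i$. Consequently
$$
P_{\boldsymbol b,\boldsymbol a}=P_{\boldsymbol b,\boldsymbol a_0}\prod_{i\neq j}C_{ij}^{c_{ij}}.
$$
We take $\mathcal G$ to be the union over the finitely many $\boldsymbol b$ of the monomials $P_{\boldsymbol b,\boldsymbol a_0}$ with $\boldsymbol a_0$ minimal. This set is finite and consists of admissible monomials.

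The remaining step, which is the real content, is to show that $\prod C_{ij}^{c_{ij}}$ lies in $\mathcal A_n$ whenever $\boldsymbol c\geq0$ has zero balance at every vertex. Interpret $\boldsymbol c$ as a directed multigraph on $[n]$ with $c_{ij}$ arcs from $i$ to $j$. Zero balance means in-degree equals out-degree at each vertex, so the multigraph is Eulerian in each component. The standard flow-decomposition argument applies: starting from any arc and following outgoing arcs, one must eventually revisit a vertex, which closes a directed cycle $\gamma$ of length $r\geq2$ (the diagonal is zero, so there are no loops). Removing $\gamma$ preserves the balance conditions, and induction on $\sum c_{ij}$ expresses $\prod C_{ij}^{c_{ij}}$ as a product of $Z_\gamma$'s. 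These lie in $\mathcal A_n$, either by the transformation law $C_{ij}\mapsto (s_i/s_j)C_{ij}$ directly or by \cite[Theorem 2.2]{B-2026}.

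The only delicate point is making sure the decomposition really uses the \emph{minimal} solutions. In particular, one must check that $P_{\boldsymbol b,\boldsymbol a_0}$ is itself admissible, which holds because $\boldsymbol a_0\in S_{\boldsymbol b}$. One must also check that multiplying an element of $\mathcal P_{n,-1}$ by an element of $\mathcal A_n$ stays in $\mathcal P_{n,-1}$, which is immediate from the weights. With these checks, $\mathcal P_{n,-1}=\sum_{P\in\mathcal G}\mathcal A_nP$, and both finite generation and the choice of generators among admissible monomials follow.
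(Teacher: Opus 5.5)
Your argument is correct. The paper gives no proof of this statement of its own: it quotes it from \cite{B-2026}, Theorem 4.2, so there is no in-paper argument to match yours against.

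You derive it from the other quoted result, the spanning theorem \cite[Theorem 3.2]{B-2026}, by a purely combinatorial reduction:
\begin{itemize}
\item There are only finitely many vectors $\boldsymbol b$ with $\sum_I b_I=n$.
\item For each $\boldsymbol b$, Dickson's lemma gives finitely many minimal solutions $\boldsymbol a_0$ of the inhomogeneous balance system $\partial_{\boldsymbol a}(i)=3-d_{\boldsymbol b}(i)$.
\item Any other solution exceeds some minimal one by a nonnegative balanced matrix $\boldsymbol c$.
\item The Eulerian (flow) decomposition of $\boldsymbol c$ into directed cycles of length $\geq2$ writes $\prod C_{ij}^{c_{ij}}$ as a product of the $Z_\gamma$. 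These are absolute invariants because $C_{ij}\mapsto (s_i/s_j)C_{ij}$.
\end{itemize}

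The checks you list also hold:
\begin{itemize}
\item $P_{\boldsymbol b,\boldsymbol a_0}$ is admissible.
\item $\mathcal P_{n,-1}$ is stable under multiplication by $\mathcal A_n$.
\item The componentwise order is well-founded, so every solution dominates a minimal one.
\end{itemize}
Together these give both inclusions in $\mathcal P_{n,-1}=\sum_{P\in\mathcal G}\mathcal A_nP$.

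What your route buys is an explicit finite generating set, namely the $P_{\boldsymbol b,\boldsymbol a_0}$ with $\boldsymbol a_0$ minimal. It also makes clear that all the invariant-theoretic content sits in Theorem 3.2, and that finite generation of the module is then a Gordan/Dickson-type fact about the monoid of balanced matrices. The set $\mathcal G$ need not be minimal, since linear relations among the $D_I$ and $C_{ij}$ may make some of its elements redundant, but that does not affect the claim.
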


\section{Projective Differential Moment Invariants}
\label{sec:projective-moment-invariants}

In this section, we pass from the multipoint projective
constructions introduced above to differential moments
of an image. We first define the corresponding moment variables and
the map that sends multipoint polynomial kernels
to polynomials in moments. We then state and prove
the main result establishing a correspondence between
relative projective invariants of weight $-1$ and
projective differential moment invariants.

\subsection{Differential moments}
The differential moments of an image $f$
are defined as the quantities
\begin{equation}
\label{differential-moments}
m_{\alpha,\beta,r,s}(f)
=
\int_\Omega x^\alpha y^\beta p^r q^s f(x,y)\,dx\,dy,
\qquad
\alpha,\beta,r,s\in\mathbb Z_{\geq0}.
\end{equation}
Hereafter, we distinguish the following characteristics:
\begin{itemize}
\item \textit{moment degree} --- the total polynomial
degree in the moment variables; a moment monomial
of degree $n$ contains $n$ moment factors,
counted with their multiplicities;
\item \textit{moment order}
of $m_{\alpha,\beta,r,s}$ --- the number $\alpha+\beta+r+s$;
\item \textit{$D$-order} of a moment
$\phi_{\alpha\beta rst}$ --- the number $\alpha+\beta+r+s+t$,
while the $D$-order of a particular representation is
the maximum of these numbers over its moment factors;
\item \textit{differential order} --- the highest
order of derivatives of the function $f$ used in the kernel;
only first derivatives are used in this paper;
\item \textit{derivative degree} of an individual moment
kernel --- its polynomial degree in $p,q$:
$r+s$ for $m_{\alpha,\beta,r,s}$ and $r+s+t$
for $\phi_{\alpha\beta rst}$ after substituting
$w=xp+yq$.
\end{itemize}
For example, the invariant $S$ has moment degree $4$
and the displayed $D$-moment representation has order $3$.
Each of its moment factors has derivative degree
$3$, although only derivatives of first
differential order are used.

We denote the corresponding single-point moment kernel
by $b_{\alpha,\beta,r,s}(z)=x^\alpha y^\beta p^r q^s$,
where $z=(x,y,p,q)$.

For an admissible pair $(g,f)$, a change of variables gives
$$
m_{\alpha,\beta,r,s}(g\cdot f)
=
\int_\Omega
b_{\alpha,\beta,r,s}(gz)\,
|j_g(x,y)|\,f(x,y)\,dx\,dy.
$$

Since the kernel generally becomes rational
after a projective transformation, the transformed
moment need not be expressible polynomially
in terms of the original moments. In contrast, the Lie algebra action
of the projective group has a substantially simpler structure:
it sends each moment to a linear
combination of moments.

Consider the formal algebra
$$
\mathcal{DP}
=
\mathbb R[
m_{\alpha,\beta,r,s}
\mid
\alpha,\beta,r,s\in\mathbb Z_{\geq0}
],
$$
where the moment symbols $m_{\alpha,\beta,r,s}$ are algebraically independent variables.
We denote by $\mathcal{DP}_n$ the homogeneous component of
degree $n$ in $\mathcal{DP}$.
The algebra $\mathcal{DP}$ is the principal algebraic object
in which we will study projective differential
moment invariants. The formal approach allows us to study
the Lie algebra action on the moment variables
instead of considering the projective group action directly.

For a specific image $f$, we replace each formal moment
symbol $m_{\alpha,\beta,r,s}$ with the numerical value
of the corresponding differential moment defined by
formula~\eqref{differential-moments}. This substitution
defines a homomorphism
$
\operatorname{ev}_f:\mathcal{DP}\to\mathbb R.
$
Thus, if $I\in\mathcal{DP}$ is a polynomial in the moment
symbols, substituting the moments of the image $f$
gives the number
$$
I(f)=\operatorname{ev}_f(I).
$$

\subsection{Lie algebra action}

In this subsection, we derive explicit formulas for the action of the Lie algebra
$\mathfrak{sl}_3$
on differential moments and on
the moment algebra $\mathcal{DP}$.
Let
$$
X=\xi(x,y)\partial_x+\eta(x,y)\partial_y
$$
be an element of the Lie algebra $\mathfrak{sl}_3$ realized
as a vector field on the plane. Its first prolongation
to the first-jet space with coordinates $(x,y,p,q)$
takes the form
\begin{equation}
\label{first_prolong}
X^{(1)}
=
\xi\partial_x+\eta\partial_y
-(\xi_xp+\eta_xq)\partial_p
-(\xi_yp+\eta_yq)\partial_q,
\end{equation}
see, for example,~\cite{Olver1993LieGroups}.
When passing to moment integrals, the
change-of-variables formula introduces the multiplier $|j_g(x,y)|$.
For the one-parameter subgroup $g_t$
corresponding to the vector field $X$, we have
$j_{g_0}(x,y)=1$, and hence
$$
\left.\frac{d}{dt}|j_{g_t}(x,y)|\right|_{t=0}
=
\left.\frac{d}{dt}j_{g_t}(x,y)\right|_{t=0}
=
\operatorname{div}X
=
\xi_x+\eta_y.
$$
Thus, on single-point kernels, we obtain the operator
$$
\mathscr L_X b
=
X^{(1)}b+(\xi_x+\eta_y)b.
$$

Using the special case of the map $\Phi_n$
introduced in Section~\ref{sec:main_results}, for a monomial $b=b_{\alpha,\beta,r,s}$, set
$m_b=\Phi_1(b)$.
We define the action on generators by the rule
$$\mathscr D_Xm_b=\Phi_1(\mathscr L_Xb),$$ and extend it to
$\mathcal{DP}$ as a derivation.

In its standard realization on the affine plane, the Lie algebra
$\mathfrak{sl}_3$ is an eight-dimensional algebra of vector fields.
We use the following basis, which is equivalent to the classical basis
in~\cite[p. 26]{SL-1}:
\begin{equation*}
\begin{aligned}
X_1&=\partial_x, &
X_2&=\partial_y, &
X_3&=x\partial_x, &
X_4&=y\partial_y,\\
X_5&=y\partial_x, &
X_6&=x\partial_y, &
X_7&=x^2\partial_x+xy\partial_y, &
X_8&=xy\partial_x+y^2\partial_y.
\end{aligned}
\end{equation*}

\begin{example}
Consider the basis element
$X_7=x^2\partial_x+xy\partial_y$.
Here, $\xi=x^2$, $\eta=xy$, so
$\xi_x=2x$, $\xi_y=0$, $\eta_x=y$, and $\eta_y=x$.
The first prolongation formula~\eqref{first_prolong} gives
$$
X_7^{(1)}
=
x^2\partial_x+xy\partial_y
-(2xp+yq)\partial_p-xq\partial_q.
$$
Since $\operatorname{div}X_7=2x+x=3x$,
the weighted operator on moment kernels takes the form
$$
\mathscr L_{X_7}
=
x^2\partial_x+xy\partial_y
-(2xp+yq)\partial_p-xq\partial_q+3x.
$$
Therefore,
\begin{align*}
\mathscr D_7m_{\alpha,\beta,r,s}
&=
\Phi_1\!\left[
\left(
x^2\partial_x+xy\partial_y
-(2xp+yq)\partial_p-xq\partial_q+3x
\right)
x^\alpha y^\beta p^rq^s
\right]
\\
&=
\Phi_1\!\left[
(\alpha+\beta-2r-s+3)
x^{\alpha+1}y^\beta p^rq^s
-rx^\alpha y^{\beta+1}p^{r-1}q^{s+1}
\right]
\\
&=
(\alpha+\beta-2r-s+3)
m_{\alpha+1,\beta,r,s}
-r\,m_{\alpha,\beta+1,r-1,s+1}.
\end{align*}

\end{example}

Similarly, for all $\mathscr D_k=\mathscr D_{X_k}$, we obtain
\begin{equation}
\label{moment-operators}
\begin{aligned}
\mathscr D_1m_{\alpha,\beta,r,s}
&=\alpha m_{\alpha-1,\beta,r,s},\\
\mathscr D_2m_{\alpha,\beta,r,s}
&=\beta m_{\alpha,\beta-1,r,s},\\
\mathscr D_3m_{\alpha,\beta,r,s}
&=(\alpha-r+1)m_{\alpha,\beta,r,s},\\
\mathscr D_4m_{\alpha,\beta,r,s}
&=(\beta-s+1)m_{\alpha,\beta,r,s},\\
\mathscr D_5m_{\alpha,\beta,r,s}
&=\alpha m_{\alpha-1,\beta+1,r,s}
-s\,m_{\alpha,\beta,r+1,s-1},\\
\mathscr D_6m_{\alpha,\beta,r,s}
&=\beta m_{\alpha+1,\beta-1,r,s}
-r\,m_{\alpha,\beta,r-1,s+1},\\
\mathscr D_7m_{\alpha,\beta,r,s}
&=(\alpha+\beta-2r-s+3)m_{\alpha+1,\beta,r,s}
-r\,m_{\alpha,\beta+1,r-1,s+1},\\
\mathscr D_8m_{\alpha,\beta,r,s}
&=(\alpha+\beta-r-2s+3)m_{\alpha,\beta+1,r,s}
-s\,m_{\alpha+1,\beta,r+1,s-1}.
\end{aligned}
\end{equation}
Terms with zero coefficients are omitted, so moments
with negative indices do not arise.

The explicit formulas
show that the action of $\mathfrak{sl}_3$ on the moment algebra
is compatible with two natural graded structures.
In particular, it does not change the number of moment factors
in a polynomial and, as follows from~\eqref{moment-operators}, preserves the total derivative degree $r+s$
of each moment generator.

A polynomial $I\in\mathcal{DP}$ is called
a \textit{projective differential moment invariant}
if it is invariant under the specified action
of the Lie algebra $\mathfrak{sl}_3$, that is,
$$
\mathscr D_k I=0,
\qquad k=1,\ldots,8.
$$

The set of all projective differential moment
invariants forms the subalgebra
$$
\mathcal{MI}
=
\mathcal{DP}^{\mathfrak{sl}_3}
=
\bigcap_{k=1}^{8}\ker\mathscr D_k.
$$
We denote its homogeneous component of moment degree $n$
by
$
\mathcal{MI}_n
=
\mathcal{MI}\cap\mathcal{DP}_n .
$

Formally, the problem of constructing moment invariants now reduces
to finding common solutions of the system of linear
first-order differential equations~\eqref{moment-operators}.
The method of characteristics can provide a local description of such solutions
in terms of orbit invariants of the corresponding Lie algebra action; however,
by itself, it does not provide a general description of those solutions
that are polynomials in differential moments. Therefore, below,
we use an algebraic approach based on
the map $\Phi_n$, polarization, and multipoint
relative projective invariants.

\subsection{Moment realization and polarization}

Every monomial in $R_n$ has the form
$$\prod_{i=1}^{n}b_{\alpha_i,\beta_i,r_i,s_i}(z_i).$$
Define the linear map
$\Phi_n:R_n\to\mathcal{DP}_n$ by the rule
$$
\Phi_n\left(
\prod_{i=1}^{n}b_{\alpha_i,\beta_i,r_i,s_i}(z_i)
\right)
=
\prod_{i=1}^{n}m_{\alpha_i,\beta_i,r_i,s_i}.
$$
After substituting the moments of an image, it is realized
by the multiple integral
\begin{equation}
\label{Phi-n}
\Phi_n(F)(f)
=
\int_{\Omega^n}
F(\boldsymbol z)
\prod_{i=1}^{n}\bigl(f(x_i,y_i)\,dx_i\,dy_i\bigr).
\end{equation}

By the definition of $\Phi_n$, each factor depending on the set of variables $z_i$ is replaced by the corresponding moment variable. Thus, permuting the sets $z_1,\ldots,z_n$ only permutes the moment factors, leaving their product unchanged.
Therefore, for the symmetrization operator
$$
\operatorname{Sym}_nF
=
\frac1{n!}\sum_{\sigma\in S_n}
F(z_{\sigma(1)},\ldots,z_{\sigma(n)}),
$$
we have
$$
\Phi_n(F)=\Phi_n(\operatorname{Sym}_nF).
$$

For a moment monomial
$M=\prod_{i=1}^{n}m_{\alpha_i,\beta_i,r_i,s_i}$,
define its complete polarization by
$$
\operatorname{Pol}_n(M)
=
\frac1{n!}\sum_{\sigma\in S_n}
\prod_{i=1}^{n}
b_{\alpha_i,\beta_i,r_i,s_i}(z_{\sigma(i)}).
$$
Linear extension gives the map
$\operatorname{Pol}_n:\mathcal{DP}_n\to R_n^{S_n},$ where $R_n^{S_n}$ denotes the subspace of polynomials in $R_n$
invariant under permutations of the $n$ sets of variables
$z_1,\ldots,z_n$.

\begin{example}\label{FF}
For $F=x_1y_1p_1q_1\in R_n$, we have
$\Phi_n(F)=m_{1,1,1,1}m_{0,0,0,0}^{\,n-1}$, with
$$
\operatorname{Pol}_n
\left(m_{1,1,1,1}m_{0,0,0,0}^{\,n-1}\right)
=
\frac1n\sum_{i=1}^{n}x_iy_ip_iq_i.
$$
Thus, polarization recovers the symmetric preimage of a moment polynomial in the space of polynomials in
first-jet coordinates.
\end{example}

For each basis element $X_k$, $k=1,\ldots,8$,
we retain the notation
$\mathscr L_k=\mathscr L_{X_k}$ for the operator acting
on polynomials in the coordinates of a single first jet
$z=(x,y,p,q)$. We denote by
$\mathscr L_k^{(i)}$ the same operator
acting only on the variables
$z_i=(x_i,y_i,p_i,q_i)$, that is, the operator obtained from
$\mathscr L_k$ by the substitution
$
(x,y,p,q)\longmapsto(x_i,y_i,p_i,q_i).
$
Then the operator of the diagonal action on $n$ sets of variables
takes the form
$$
\mathscr L_k^{[n]}
=
\sum_{i=1}^{n}\mathscr L_k^{(i)},
\qquad k=1,\ldots,8.
$$

The maps $\Phi_n$ and $\operatorname{Pol}_n$
not only relate multipoint kernels to moment
polynomials but are also compatible with the action of the Lie algebra
$\mathfrak{sl}_3$, as formalized in the following lemma.
\begin{lemma}
\label{lem:polarization-identities}
For every $n\geq1$, the following identities hold:
\begin{align}
\Phi_n\circ\operatorname{Pol}_n
&=\operatorname{id}_{\mathcal{DP}_n},
\label{Phi-Pol-identity}\\
\operatorname{Pol}_n\circ\Phi_n
&=\operatorname{Sym}_n.
\label{Pol-Phi-sym}
\end{align}
Moreover, for every $k=1,\ldots,8$,
$F\in R_n$, and $I\in\mathcal{DP}_n$, we have
\begin{equation}
\label{polarization-equivariance}
\begin{aligned}
\mathscr D_k\Phi_n(F)
&=\Phi_n(\mathscr L_k^{[n]}F),\\
\operatorname{Pol}_n(\mathscr D_kI)
&=\mathscr L_k^{[n]}\operatorname{Pol}_n(I).
\end{aligned}
\end{equation}
In particular, $\Phi_n$ and $\operatorname{Pol}_n$ are mutually
inverse maps between $R_n^{S_n}$ and
$\mathcal{DP}_n$ and intertwine the corresponding actions
of the Lie algebra $\mathfrak{sl}_3$.
\end{lemma}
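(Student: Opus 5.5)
The plan is to check every identity on monomials and then extend by linearity, since all maps involved ($\Phi_n$, $\operatorname{Pol}_n$, $\operatorname{Sym}_n$, $\mathscr D_k$, $\mathscr L_k^{[n]}$) are linear.

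\emph{The identities \eqref{Phi-Pol-identity} and \eqref{Pol-Phi-sym}.} Take a moment monomial $M=\prod_i m_{\alpha_i,\beta_i,r_i,s_i}$. Its polarization $\operatorname{Pol}_n(M)$ is the average of the $n!$ monomials $\prod_i b_{\alpha_i,\beta_i,r_i,s_i}(z_{\sigma(i)})$. Applying $\Phi_n$ to each of these monomials gives the same product $M$, because $\mathcal{DP}$ is commutative and $\Phi_n$ only records which single-point kernel sits in each set of variables. Hence $\Phi_n\operatorname{Pol}_n M=M$.

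Conversely, take a monomial $F=\prod_i b_{\alpha_i,\beta_i,r_i,s_i}(z_i)\in R_n$. Then $\operatorname{Pol}_n\Phi_n F$ is, by definition, $\frac1{n!}\sum_\sigma F(z_{\sigma(1)},\dots,z_{\sigma(n)})$, which is $\operatorname{Sym}_nF$. Here one should note that $\operatorname{Pol}_n$ is well defined: the average over $S_n$ does not depend on the order in which the factors of $M$ are listed, since reordering the factors amounts to reindexing $\sigma$.

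\emph{The mutual inverse statement.} Restricted to $R_n^{S_n}$, the operator $\operatorname{Sym}_n$ is the identity. Together with \eqref{Phi-Pol-identity}, this shows that $\Phi_n|_{R_n^{S_n}}$ and $\operatorname{Pol}_n$ are mutually inverse.

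\emph{The equivariance identities \eqref{polarization-equivariance}.} The key observation is that $\mathscr L_k$ is a first-order differential operator plus multiplication by a function. Concretely, $\mathscr L_k=X_k^{(1)}+\operatorname{div}X_k$, so it obeys the twisted Leibniz rule
$$\mathscr L_k(bc)=(\mathscr L_k b)c+b\,X_k^{(1)}c.$$
We use it only through $\mathscr L_k^{(i)}$ on distinct sets of variables. Since $\mathscr L_k^{(i)}$ acts only on $z_i$, for $F=\prod_i b_i(z_i)$ we get
$$\mathscr L_k^{[n]}F=\sum_i\Bigl(\prod_{j\ne i}b_j(z_j)\Bigr)\mathscr L_k^{(i)}b_i(z_i).$$
Each summand is a sum of monomials in which the $z_j$-factors with $j\ne i$ are unchanged and the $z_i$-factor is replaced by the polynomial $\mathscr L_k b_i$. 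Then $\Phi_n$, which is multiplicative across distinct sets, gives
$$\Phi_n(\mathscr L_k^{[n]}F)=\sum_i\Bigl(\prod_{j\ne i}m_{b_j}\Bigr)\Phi_1(\mathscr L_k b_i).$$
Since $\Phi_1(\mathscr L_k b_i)=\mathscr D_k m_{b_i}$ by definition, this sum is exactly the derivation $\mathscr D_k$ applied to $\prod m_{b_i}=\Phi_n(F)$. That proves the first identity.

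For the second identity, write $I=\Phi_n(\operatorname{Pol}_nI)$ and apply the first identity:
$$\mathscr D_kI=\Phi_n\bigl(\mathscr L_k^{[n]}\operatorname{Pol}_nI\bigr).$$
Applying $\operatorname{Pol}_n$ and using \eqref{Pol-Phi-sym} gives $\operatorname{Sym}_n\mathscr L_k^{[n]}\operatorname{Pol}_nI$. The operator $\mathscr L_k^{[n]}$ is a symmetric sum of the $\mathscr L_k^{(i)}$, so it commutes with permutations of the sets $z_i$ and preserves $R_n^{S_n}$. Because $\operatorname{Pol}_nI$ is symmetric, the outer $\operatorname{Sym}_n$ therefore acts trivially, and we obtain $\operatorname{Pol}_n\mathscr D_kI=\mathscr L_k^{[n]}\operatorname{Pol}_nI$.

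\emph{Main obstacle.} The only delicate point is the bookkeeping in the first equivariance identity. One must make sure that the "twisted" term $\operatorname{div}X_k$ appears exactly once per set of variables, matching the single-point definition of $\mathscr D_k$ on each generator; it must not be applied once per monomial. This holds because each $\mathscr L_k^{(i)}$ carries its own divergence term $\operatorname{div}X_k(x_i,y_i)$.
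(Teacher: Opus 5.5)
Your proof is correct and follows the paper's argument essentially step by step. You verify $\Phi_n\circ\operatorname{Pol}_n=\operatorname{id}$ and $\operatorname{Pol}_n\circ\Phi_n=\operatorname{Sym}_n$ on monomials, compute $\mathscr L_k^{[n]}$ factor by factor on a product kernel and match it with the derivation property of $\mathscr D_k$, and obtain the second equivariance identity from the commutation of $\mathscr L_k^{[n]}$ with $\operatorname{Sym}_n$; your extra remarks on the well-definedness of $\operatorname{Pol}_n$ and on the divergence term appearing once per set of variables only make explicit what the paper uses implicitly.
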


\begin{proof}
All maps in these identities are linear,
so it suffices to verify them on monomials.

Let
$b_i=b_{\alpha_i,\beta_i,r_i,s_i}$,
$i=1,\ldots,n$, be single-point moment
kernels of monomial form, and let
$$
M=\prod_{i=1}^{n}m_{b_i}
$$
be the corresponding moment monomial. By the definition of polarization,
$$
\operatorname{Pol}_n(M)
=
\frac1{n!}\sum_{\sigma\in S_n}
\prod_{i=1}^{n}b_i(z_{\sigma(i)}).
$$
For every permutation $\sigma\in S_n$, applying $\Phi_n$
replaces the factors $b_i(z_{\sigma(i)})$ with the corresponding
moments $m_{b_i}$, so
$$
\Phi_n\left(
\prod_{i=1}^{n}b_i(z_{\sigma(i)})
\right)
=
\prod_{i=1}^{n}m_{b_i}=M.
$$
Hence,
$$
\Phi_n(\operatorname{Pol}_n(M))
=
\frac1{n!}\sum_{\sigma\in S_n}M=M,
$$
which proves~\eqref{Phi-Pol-identity}.
Now let
$$
F=\prod_{i=1}^{n}b_i(z_i)\in R_n.
$$
Then $\Phi_n(F)=M$, and therefore
$$
\operatorname{Pol}_n(\Phi_n(F))
=
\frac1{n!}\sum_{\sigma\in S_n}
\prod_{i=1}^{n}b_i(z_{\sigma(i)})
=
\operatorname{Sym}_nF.
$$
By linearity, we obtain~\eqref{Pol-Phi-sym} for
arbitrary $F\in R_n$. In particular, if $F$ is symmetric,
then $\operatorname{Sym}_nF=F$, which implies that
$\Phi_n$ and $\operatorname{Pol}_n$ are mutually inverse on
$R_n^{S_n}$ and $\mathcal{DP}_n$.

We now verify the first identity
in~\eqref{polarization-equivariance}.
We have
$$
\mathscr L_k^{[n]}F
=
\sum_{i=1}^{n}
(\mathscr L_kb_i)(z_i)
\prod_{j\neq i}b_j(z_j).
$$
Applying $\Phi_n$ and using the definition of $\Phi_1$,
we obtain
$$
\Phi_n(\mathscr L_k^{[n]}F)
=
\sum_{i=1}^{n}
\Phi_1(\mathscr L_kb_i)
\prod_{j\neq i}m_{b_j}.
$$
On the other hand, by definition,
$
\mathscr D_km_{b_i}
=
\Phi_1(\mathscr L_kb_i),
$
and $\mathscr D_k$ acts on $\mathcal{DP}$ as a derivation.
Therefore,
$$
\mathscr D_k\Phi_n(F)
=
\mathscr D_k\left(\prod_{i=1}^{n}m_{b_i}\right)
=
\sum_{i=1}^{n}
\Phi_1(\mathscr L_kb_i)
\prod_{j\neq i}m_{b_j}
=
\Phi_n(\mathscr L_k^{[n]}F).
$$
Linearity gives the first equality
in~\eqref{polarization-equivariance} for arbitrary
$F\in R_n$.

For the second equality, observe that the operator
$\mathscr L_k^{[n]}$ commutes with permutations of the sets
of variables and hence with the operator $\operatorname{Sym}_n$.
Let $I\in\mathcal{DP}_n$ and
$
F=\operatorname{Pol}_n(I).
$
Then $F$ is symmetric and, by~\eqref{Phi-Pol-identity},
$\Phi_n(F)=I$. Thus,
\begin{align*}
\operatorname{Pol}_n(\mathscr D_kI)
&=
\operatorname{Pol}_n\bigl(\mathscr D_k\Phi_n(F)\bigr)
=
\operatorname{Pol}_n\bigl(\Phi_n(\mathscr L_k^{[n]}F)\bigr)
=
\operatorname{Sym}_n(\mathscr L_k^{[n]}F)
=
\mathscr L_k^{[n]}\operatorname{Sym}_nF=
\\
&=
\mathscr L_k^{[n]}F
=
\mathscr L_k^{[n]}\operatorname{Pol}_n(I).
\end{align*}
This proves the second equality and completes the proof of the lemma.

\end{proof}

\subsection{Completeness theorem}

We first establish a connection between the group law of relative
invariance and the conditions of annihilation by the corresponding
Lie algebra operators. For a polynomial kernel $F$, the weight $-1$ condition
is written as
$
\mathcal J_g(\boldsymbol z)F(g\boldsymbol z)=F(\boldsymbol z).
$
Differentiating this equality along the one-parameter subgroups
generated by the basis elements $X_k$ gives the system
\begin{equation}
\label{relative-equations}
\mathscr L_k^{[n]}F=0,
\qquad k=1,\ldots,8.
\end{equation}

Conversely, suppose that
\eqref{relative-equations} holds.
For the one-parameter subgroup
$g_t=\exp(tX_k)$, differentiating the coordinate
Jacobian multiplier $\mathcal J_{g_t}$
and applying the chain rule gives
$$
\frac{d}{dt}
\left[
\mathcal J_{g_t}(\boldsymbol z)F(g_t\boldsymbol z)
\right]
=
\mathcal J_{g_t}(\boldsymbol z)
\bigl(\mathscr L_k^{[n]}F\bigr)(g_t\boldsymbol z)
=
0.
$$
At $t=0$, the expression in square brackets equals
$F(\boldsymbol z)$, and hence
$
\mathcal J_{g_t}(\boldsymbol z)F(g_t\boldsymbol z)
=
F(\boldsymbol z)
$
for each of these one-parameter subgroups.
By the multiplication rule for Jacobians, the same equality
holds for their compositions. Since
$X_1,\ldots,X_8$ form a basis of
$\mathfrak{sl}_3$ and the connected group
$PGL(3,\mathbb R)\simeq SL(3,\mathbb R)$
is generated by its one-parameter subgroups,
the relative invariance law holds for all
$g\in PGL(3,\mathbb R)$.

Thus,
\begin{equation}
\label{relative-characterization}
F\in\mathcal P_{n,-1}
\quad\Longleftrightarrow\quad
\mathscr L_k^{[n]}F=0
\quad\text{for }k=1,\ldots,8.
\end{equation}

This characterization of relative invariants, together with
the intertwining property of the Lie algebra actions, now allows us
to establish an exact correspondence between multipoint kernels
of weight $-1$ and projective differential moment
invariants.

\begin{theorem}
\label{thm:moment-completeness-formal}
For every $n\geq 1 $, the map $\Phi_n$ sends
$\mathcal P_{n,-1}$ to $\mathcal{MI}_n$. Its restriction
$$
\Phi_n:
(\mathcal P_{n,-1})^{S_n}
\longrightarrow\mathcal{MI}_n
$$
is an isomorphism of vector spaces whose inverse
is the complete polarization operator $\operatorname{Pol}_n$.
\end{theorem}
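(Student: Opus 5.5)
The argument combines Lemma~\ref{lem:polarization-identities} with the characterization~\eqref{relative-characterization}. It splits into three steps: $\Phi_n$ maps $\mathcal P_{n,-1}$ into $\mathcal{MI}_n$; $\operatorname{Pol}_n$ maps $\mathcal{MI}_n$ into $(\mathcal P_{n,-1})^{S_n}$; and the two maps are mutually inverse on these subspaces.

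\emph{Step 1.} Let $F\in\mathcal P_{n,-1}$. By~\eqref{relative-characterization}, $\mathscr L_k^{[n]}F=0$ for $k=1,\ldots,8$. The first identity in~\eqref{polarization-equivariance} then gives
$$
\mathscr D_k\Phi_n(F)=\Phi_n(\mathscr L_k^{[n]}F)=0 .
$$
Hence $\Phi_n(F)\in\bigcap_k\ker\mathscr D_k$. Since $\Phi_n$ takes values in $\mathcal{DP}_n$, we get $\Phi_n(F)\in\mathcal{MI}_n$. This already proves the first assertion, and it covers $n=1,2$ as well, since nothing beyond the lemma and~\eqref{relative-characterization} is used.

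\emph{Step 2.} Let $I\in\mathcal{MI}_n$ and put $F=\operatorname{Pol}_n(I)$. By construction $F\in R_n^{S_n}$. The second identity in~\eqref{polarization-equivariance} gives
$$
\mathscr L_k^{[n]}F=\operatorname{Pol}_n(\mathscr D_kI)=\operatorname{Pol}_n(0)=0,
$$
so $F\in\mathcal P_{n,-1}$ by~\eqref{relative-characterization}. Therefore $\operatorname{Pol}_n(\mathcal{MI}_n)\subset(\mathcal P_{n,-1})^{S_n}$.

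\emph{Step 3.} We check the two compositions.
\begin{itemize}
\item On $\mathcal{MI}_n$, the identity $\Phi_n\circ\operatorname{Pol}_n=\operatorname{id}$ is~\eqref{Phi-Pol-identity}.
\item On $(\mathcal P_{n,-1})^{S_n}\subset R_n^{S_n}$, the identity~\eqref{Pol-Phi-sym} gives $\operatorname{Pol}_n\circ\Phi_n=\operatorname{Sym}_n=\operatorname{id}$.
\end{itemize}
Both maps are linear, so the restriction of $\Phi_n$ to $(\mathcal P_{n,-1})^{S_n}$ is a vector-space isomorphism onto $\mathcal{MI}_n$ with inverse $\operatorname{Pol}_n$.

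The main point requiring care is the equivalence~\eqref{relative-characterization}, particularly the converse direction. There one needs the derivative of $\mathcal J_{g_t}$ along $\exp(tX_k)$ to produce exactly the divergence term in $\mathscr L_k$, matching the $|j_g|$ convention near the identity, where $j_{g_t}>0$. One also needs the statement that a connected group is generated by the one-parameter subgroups of a basis. The preceding discussion establishes this, so I would take it as given. The only remaining check is that $\mathscr L_k^{[n]}$ commutes with permutations, which the lemma already uses.
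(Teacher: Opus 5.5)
Your proposal is correct and follows the paper's proof essentially step for step. The paper likewise combines \eqref{relative-characterization} with the first identity in \eqref{polarization-equivariance} to get $\Phi_n(\mathcal P_{n,-1})\subset\mathcal{MI}_n$, uses the second identity to get $\operatorname{Pol}_n(\mathcal{MI}_n)\subset(\mathcal P_{n,-1})^{S_n}$, and then applies \eqref{Phi-Pol-identity} and \eqref{Pol-Phi-sym} to conclude that the two maps are mutually inverse on these subspaces.
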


\begin{proof}
First, let $F\in\mathcal P_{n,-1}$.
By~\eqref{relative-characterization},
we have
$
\mathscr L_k^{[n]}F=0,
k=1,\ldots,8.
$
By Lemma~\ref{lem:polarization-identities}, the map
$\Phi_n$ intertwines the corresponding Lie algebra actions, so
$$
\mathscr D_k\Phi_n(F)
=
\Phi_n(\mathscr L_k^{[n]}F)
=
0,
\qquad k=1,\ldots,8.
$$
Since $\Phi_n(F)\in\mathcal{DP}_n$, the definition
of a moment invariant implies that
$
\Phi_n(F)\in\mathcal{MI}_n.
$
Thus, in particular, the restriction of $\Phi_n$ to
$(\mathcal P_{n,-1})^{S_n}$ is well defined.

Now let $I\in\mathcal{MI}_n$ and
$
F=\operatorname{Pol}_n(I).
$
By the definition of polarization, $F$ is a symmetric polynomial.
Moreover,
$$
\mathscr D_kI=0,
\qquad k=1,\ldots,8.
$$
Using Lemma~\ref{lem:polarization-identities} again,
we obtain
$$
\mathscr L_k^{[n]}F
=
\mathscr L_k^{[n]}\operatorname{Pol}_n(I)
=
\operatorname{Pol}_n(\mathscr D_kI)
=
0,
\qquad k=1,\ldots,8.
$$
By~\eqref{relative-characterization},
this means that $F\in\mathcal P_{n,-1}$. Hence,
$$
\operatorname{Pol}_n(I)
\in
(\mathcal P_{n,-1})^{S_n}.
$$

It remains to verify that the maps are mutually inverse.
By Lemma~\ref{lem:polarization-identities}, for every
$I\in\mathcal{MI}_n$, we have
$
\Phi_n(\operatorname{Pol}_n(I))=I.
$
Therefore, $\Phi_n$ is surjective onto $\mathcal{MI}_n$.
For symmetric $F\in\mathcal P_{n,-1}$, the same lemma gives
$$
\operatorname{Pol}_n(\Phi_n(F))
=
\operatorname{Sym}_nF
=
F.
$$
Thus, $\Phi_n$ is also injective on
$(\mathcal P_{n,-1})^{S_n}$, and
$\operatorname{Pol}_n$ is its inverse.
\end{proof}

Thus, describing moment invariants reduces
to describing the symmetric component of $\mathcal P_{n,-1}$.
This approach is analogous to the symbolic method of classical
invariant theory: moment polynomials are represented
by compact multipoint moment kernels
that are relative invariants of weight $-1$.

The cases $n=1$ and $n=2$ require separate
consideration.
\begin{theorem}
\label{thm:low-point-relative-invariants}
For the diagonal projective action on one or two
first jets,
$$
\mathcal P_{1,-1}=0,
\qquad
\mathcal P_{2,-1}=0.
$$
\end{theorem}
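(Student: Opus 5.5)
We use the infinitesimal characterization \eqref{relative-characterization}: $F\in\mathcal P_{n,-1}$ if and only if $\mathscr L_k^{[n]}F=0$ for $k=1,\ldots,8$. The strategy is to exploit the simplest operators first (translations and the diagonal scalings), which force strong homogeneity and translation-invariance conditions. We then use the remaining operators, or a direct group-theoretic normalization, to obtain a contradiction for $n=1,2$.

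For $n=1$: from $\mathscr L_1=\partial_x$ and $\mathscr L_2=\partial_y$ (translations have zero divergence), $F$ depends only on $(p,q)$. The operator $\mathscr L_3=x\partial_x-p\partial_p+1$ then acts on $p^rq^s$ by multiplication by $1-r$. Likewise $\mathscr L_4$ multiplies by $1-s$. So $F$ must be a multiple of $pq$. Finally, $\mathscr L_5=y\partial_x-q\partial_p$ sends $pq$ to $-q^2\neq0$. Hence $F=0$.

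For $n=2$: translation invariance gives $F=F(x_1-x_2,y_1-y_2,p_1,q_1,p_2,q_2)$. The scaling $\mathscr L_3^{[2]}$ acts on a monomial by multiplication by $2+\alpha-r_1-r_2$, where $\alpha$ is the total degree in $x$-differences. Similarly $\mathscr L_4^{[2]}$ gives the factor $2+\beta-s_1-s_2$. Thus $F$ is bihomogeneous with $r_1+r_2=\alpha+2$ and $s_1+s_2=\beta+2$.

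The cleanest route to completion is then via the group, arguing at a generic point. By \eqref{eq:weight_minus_one_simple}, a nonzero $F$ would satisfy $F(g\boldsymbol z)=\mathcal J_g(\boldsymbol z)^{-1}F(\boldsymbol z)$. We choose a subgroup $G_0$ of projective transformations fixing both points $(x_1,y_1)$ and $(x_2,y_2)$ and both lines $\ell_1,\ell_2$ (the covectors $(p_i,q_i,-w_i)$ up to scale, which transform by $C_{ij}$-type factors). Generically the two points and two lines form a configuration whose stabilizer in $PGL_3$ is still positive-dimensional: $\dim PGL_3=8$, while two points and two lines impose $8$ conditions, but the jet variables are scaled covectors, so an extra scaling occurs. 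Concretely, we take the points $(0,0)$ and $(1,0)$ with generic lines through them, and exhibit a one-parameter family $g_t$ fixing all four jets while $\mathcal J_{g_t}$ at the two points is nonconstant in $t$. Then $F(\boldsymbol z)=\mathcal J_{g_t}(\boldsymbol z)^{-1}F(\boldsymbol z)$ forces $F(\boldsymbol z)=0$ on a Zariski-dense set, so $F=0$.

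We expect this stabilizer computation to be the main obstacle, since the orbit of $PGL_3$ on pairs of first jets is not open (the action is $8$-dimensional on an $8$-dimensional space, with the invariant $C_{12}C_{21}$). If the stabilizer turns out to be trivial, we will instead argue through the orbit structure. The absolute invariant $\mathsf X=C_{12}C_{21}$ parametrizes orbits, so $\mathcal J_g^{-1}$ must restrict to a well-defined function on each orbit, and we would show that some $g$ in the stabilizer of a generic pair has $\mathcal J_g\ne1$. As a fallback, we would apply $\mathscr L_7^{[2]}$ and $\mathscr L_5^{[2]}$ to the bihomogeneous ansatz and show, by comparing leading terms in $(x_1-x_2)$, that the resulting linear system has only the trivial solution.
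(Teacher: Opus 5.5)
Your $n=1$ argument is the paper's. There is one slip: $X_5=y\partial_x$ prolongs to $\mathscr L_5=y\partial_x-p\partial_q$, so $\mathscr L_5(pq)=-p^2$, not $-q^2$, but the conclusion is unaffected.

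For $n=2$, however, the proof stops where the real work begins. Everything after the scaling step rests on one claim: for $\boldsymbol z$ in a dense set there is $g$ with $g\boldsymbol z=\boldsymbol z$ and $\mathcal J_g(\boldsymbol z)\neq1$. You do not establish it. You announce that you will exhibit such a $g_t$, then call this the main obstacle and list contingencies, and the contingencies do not help.

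Nontriviality of the stabilizer is not the issue. Your own remark that $\mathsf X=C_{12}C_{21}$ is an invariant forces generic orbits of dimension at most $7$, so a trivial generic stabilizer is impossible. Your heuristic count is also off. Two points and a line through each impose only $6$ conditions, which leaves the $2$-dimensional torus fixing the triangle $P_1P_2P_3$ with $P_3=\ell_1\cap\ell_2$. Fixing the two covector scales then cuts this down to dimension $1$, not up.

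What actually carries the theorem is that the character $g\mapsto\mathcal J_g(\boldsymbol z)$ is nontrivial on this stabilizer; a positive-dimensional stabilizer on which it were trivial would prove nothing. The ``orbit structure'' fallback merely restates this unproved fact, and the $\mathscr L_7,\mathscr L_5$ fallback is not carried out.

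The missing step is true and short. Take $\boldsymbol z$ with $C_{12}C_{21}\neq0$, a nonempty Zariski-open set. Put $v_3=\ell_1\times\ell_2$ and let $B$ be the matrix with columns $v_1,v_2,v_3$. Then $\det B=(v_1\times v_2)\cdot(\ell_1\times\ell_2)=-C_{12}C_{21}\neq0$, and $\ell_1B=(0,C_{12},0)$, $\ell_2B=(C_{21},0,0)$.

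Set $H_t=B\operatorname{diag}(1,1,t)B^{-1}$ with $t>0$; this is the homology with axis $P_1P_2$ and centre $P_3$. Then $H_tv_i=v_i$, so $s_i=1$. By \eqref{pq} the covectors transform as $\ell_i\mapsto s_i\ell_iH^{-1}$, which is what produces $C_{ij}\mapsto(s_i/s_j)C_{ij}$. Hence $\ell_iH_t=\ell_i$ shows that both jets are fixed, including the magnitudes of $(p_i,q_i)$.

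On the other hand, $j_{g_t}(x_i,y_i)=\det H_t/s_i^3=t$, so $\mathcal J_{g_t}(\boldsymbol z)=t^2$. The weight $-1$ law then gives $F(\boldsymbol z)=t^{-2}F(\boldsymbol z)$, hence $F(\boldsymbol z)=0$ on a dense set and $F=0$. Infinitesimally, the generator $X$ of $g_t$ has $X^{(1)}(z_i)=0$ and $\operatorname{div}X(x_i,y_i)=1$, so $0=(\mathscr L_X^{[2]}F)(\boldsymbol z)=2F(\boldsymbol z)$; this version stays within \eqref{relative-characterization}.

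With this step inserted, your route is genuinely different from the paper's, which never leaves the basis operators. On the hyperplane $v=0$ the paper uses $\mathscr L_8$ and $\mathscr L_5$ to remove the $q$-dependence. It then uses $\mathscr L_4$, whose zeroth-order term $2$ is the same Jacobian character, to get $F|_{v=0}=0$. Finally it propagates off $v=0$ by an induction with $\mathscr L_6$ showing $v^m\mid F$ for all $m$. Your version is shorter and makes the bihomogeneity step unnecessary. It also explains why $n=2$ is exceptional, and it gives $\mathcal P_{2,\omega}=0$ for every $\omega\neq0$ at once.
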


\begin{proof}
By the characterization of relative invariants of weight $-1$
in terms of the Lie algebra action, every polynomial
$F\in\mathcal P_{n,-1}$ satisfies the system
$$
\mathscr L_k^{[n]}F=0,
\qquad k=1,\ldots,8.
$$

First, consider the case $n=1$. Let
$
F=F(x,y,p,q)\in\mathcal P_{1,-1}.
$
For the translation generators
$
X_1=\partial_x,
X_2=\partial_y,
$
we have
$
\operatorname{div}X_1=\operatorname{div}X_2=0,
$
and their first prolongations coincide with the generators
themselves. Therefore, the corresponding operators on the space
of a single first jet take the form
$$
\mathscr L_1^{[1]}=\partial_x,
\qquad
\mathscr L_2^{[1]}=\partial_y.
$$
The conditions
$$
\mathscr L_1^{[1]}F=0,
\qquad
\mathscr L_2^{[1]}F=0
$$
give the system
$$
\frac{\partial F}{\partial x}=0,
\qquad
\frac{\partial F}{\partial y}=0.
$$
Thus, $F$ does not depend on $x$ and $y$, that is,
$
F=F(p,q).
$

For the generators
$
X_3=x\partial_x,
X_4=y\partial_y,
$
we have
$
\operatorname{div}X_3=1,
\operatorname{div}X_4=1,
$
and their first prolongations take the form
$$
X_3^{(1)}
=
x\partial_x-p\partial_p,
\qquad
X_4^{(1)}
=
y\partial_y-q\partial_q.
$$
Therefore, the corresponding operators on the space of a single first
jet are
$$
\mathscr L_3^{[1]}
=
x\partial_x-p\partial_p+1,
\qquad
\mathscr L_4^{[1]}
=
y\partial_y-q\partial_q+1.
$$
Since we have already established that $F=F(p,q)$, the conditions
$$
\mathscr L_3^{[1]}F=0,
\qquad
\mathscr L_4^{[1]}F=0
$$
take the form
$$
-p\frac{\partial F}{\partial p}+F=0,
\qquad
-q\frac{\partial F}{\partial q}+F=0,
$$
or, equivalently,
$$
p\frac{\partial F}{\partial p}=F,
\qquad
q\frac{\partial F}{\partial q}=F.
$$
The first equality means that $F$ is homogeneous of degree $1$
in the variable $p$, while the second means that it is homogeneous of degree $1$
in the variable $q$. Since $F$ is a polynomial, it follows that
$$
F=c\,pq
$$
for some constant $c$.

Now, for the generator
$
X_5=y\partial_x,
$
we have
$
\mathscr L_5^{[1]}
=
y\partial_x-p\partial_q.
$
Hence,
$$
0=\mathscr L_5^{[1]} F
=-p\frac{\partial}{\partial q}(cpq)
=-cp^2,
$$
so $c=0$. Thus,
$
\mathcal P_{1,-1}=0.
$

We now turn to the case $n=2$. Let
$
F=
F(x_1,y_1,p_1,q_1,x_2,y_2,p_2,q_2)
\in\mathcal P_{2,-1}.
$
As in the case of a single jet, we first use
the translation generators. For these,
$$
\mathscr L_1^{[2]}
=
\partial_{x_1}+\partial_{x_2},
\qquad
\mathscr L_2^{[2]}
=
\partial_{y_1}+\partial_{y_2}.
$$
Therefore, the equations
$$
\mathscr L_1^{[2]}F=0,
\qquad
\mathscr L_2^{[2]}F=0
$$
mean that $F$ depends on the coordinates of the two points
only through their differences
$$
u=x_2-x_1,
\qquad
v=y_2-y_1.
$$
Thus,
$
F=F(u,v,p_1,q_1,p_2,q_2).
$

We now show that $F$ vanishes on the hyperplane
$v=0$.
For the generator
$
X_8=xy\partial_x+y^2\partial_y,
$
we have the following first prolongation:
$$
\mathscr L_8
=
xy\partial_x+y^2\partial_y
-yp\partial_p
-(xp+2yq)\partial_q
+3y.
$$
Thus, on two jets,
$$
\mathscr L_8^{[2]}
=
\sum_{i=1}^2
\left(
x_i y_i\partial_{x_i}
+y_i^2\partial_{y_i}
-y_i p_i\partial_{p_i}
-(x_i p_i+2y_iq_i)\partial_{q_i}
+3y_i
\right).
$$
Since $F$ depends only on coordinate differences,
at $v=0$ we may set
$$
(x_1,y_1)=(0,0),
\qquad
(x_2,y_2)=(u,0).
$$
Therefore, only one term remains in $\mathscr L_8^{[2]}$:
$
-(x_2p_2)\partial_{q_2}
=-up_2\partial_{q_2}.
$
Thus,
$$
\left.\mathscr L_8^{[2]}\right|_{v=0}
=
-up_2\partial_{q_2}.
$$
Hence, the equation $\mathscr L_8^{[2]}F=0$ implies
$$
-u p_2
\left.
\frac{\partial F}{\partial q_2}
\right|_{v=0}
=0,
$$
and therefore, as a polynomial identity,
$$
\left.
\frac{\partial F}{\partial q_2}
\right|_{v=0}=0.
$$

For the generator $X_5=y\partial_x$, we have
$
\mathscr L_5^{[2]}
=
v\partial_u
-p_1\partial_{q_1}
-p_2\partial_{q_2}.
$
At $v=0$, the equation $\mathscr L_5^{[2]}F=0$ takes the form
$$
-p_1
\left.
\frac{\partial F}{\partial q_1}
\right|_{v=0}
-p_2
\left.
\frac{\partial F}{\partial q_2}
\right|_{v=0}
=0.
$$
Using the equality already established,
$F_{q_2}|_{v=0}=0$, we obtain
$$
\left.
\frac{\partial F}{\partial q_1}
\right|_{v=0}=0.
$$

Finally, for $X_4=y\partial_y$, we have
$
\mathscr L_4^{[2]}
=
v\partial_v
-q_1\partial_{q_1}
-q_2\partial_{q_2}
+2.
$
Thus, at $v=0$,
$$
-q_1
\left.
\frac{\partial F}{\partial q_1}
\right|_{v=0}
-q_2
\left.
\frac{\partial F}{\partial q_2}
\right|_{v=0}
+2F|_{v=0}
=0.
$$
The first two terms vanish, giving
$$
F|_{v=0}=0.
$$
Hence, $F$ is divisible by $v$, and
$
F=vG_1
$
for some polynomial $G_1$.

For the generator
$
X_6=x\partial_y,
$
the corresponding operator on two jets takes the form
$$
\mathscr L_6^{[2]}
=
u\partial_v
-q_1\partial_{p_1}
-q_2\partial_{p_2}.
$$
Substituting $F=vG_1$ into the equation
$\mathscr L_6^{[2]}F=0$ gives
$$
u\left(G_1+v\frac{\partial G_1}{\partial v}\right)
-vq_1\frac{\partial G_1}{\partial p_1}
-vq_2\frac{\partial G_1}{\partial p_2}
=0.
$$
Setting $v=0$, we obtain
$
u\,G_1|_{v=0}=0.
$
Since this is a polynomial identity, it follows that
$$
G_1|_{v=0}=0.
$$
Thus, $G_1$ is also divisible by $v$, and hence
$
F=v^2G_2
$
for some polynomial $G_2$.

The same argument can be repeated. Indeed, suppose that
for some $m\geq1,$
$
F=v^mG_m.
$
Then the equation $\mathscr L_6^{[2]}F=0$ takes the form
$$
v^{m-1}
\left[
mu\,G_m
+
v\left(
u\frac{\partial G_m}{\partial v}
-q_1\frac{\partial G_m}{\partial p_1}
-q_2\frac{\partial G_m}{\partial p_2}
\right)
\right]
=0.
$$
Since the polynomial ring has no zero divisors,
the expression in square brackets vanishes identically.
At $v=0$, this gives
$
mu\,G_m|_{v=0}=0,
$
and therefore
$
G_m|_{v=0}=0.
$
Thus, $G_m$ is divisible by $v$, that is, $F$ is divisible by
$v^{m+1}$.

Consequently, by induction, $F$ is divisible by $v^m$ for
every $m\geq1$. Since $F$ is a polynomial of finite
degree in $v$, this is possible only if
$
F=0.
$
Hence,
$
\mathcal P_{2,-1}=0.
$
Together with the case $n=1$ already proved, this completes
the proof of the theorem.
\end{proof}

\subsection{Derivative degree and integral identities}

The structural description of admissible monomials gives an important restriction
on the moments that occur in the invariants.

\begin{lemma}
\label{minimum-derivative-degree}
Let
$\mathcal{DP}_{\geq3}
=\mathbb R[m_{\alpha,\beta,r,s}\mid r+s\geq3]$.
For every $n\geq3$, we have
$\mathcal{MI}_n\subset\mathcal{DP}_{\geq3}$.
\end{lemma}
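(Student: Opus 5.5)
By Theorem~\ref{thm:moment-completeness-formal}, every $I\in\mathcal{MI}_n$ can be written as $I=\Phi_n(F)$ with $F=\operatorname{Pol}_n(I)\in(\mathcal P_{n,-1})^{S_n}$. By the definition of $\Phi_n$, a moment monomial $\prod_i m_{\alpha_i,\beta_i,r_i,s_i}$ occurs in $\Phi_n(F)$ only if the corresponding product $\prod_i b_{\alpha_i,\beta_i,r_i,s_i}(z_i)$ occurs in $F$. Here $r_i+s_i$ is exactly the degree of that monomial in the variables $(p_i,q_i)$. So it suffices to show the following: every polynomial $F\in\mathcal P_{n,-1}$ is a sum of polynomials that are bihomogeneous in each block $(p_i,q_i)$, and each such summand has degree $\geq3$ in $(p_i,q_i)$ for every $i=1,\ldots,n$.

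We use Theorem~3.2 of \cite{B-2026} in the form quoted above. It says that $\mathcal P_{n,-1}$ is spanned by the admissible monomials $P_{\boldsymbol b,\boldsymbol a}$, so it is enough to check the degree bound for a single admissible monomial. We therefore compute the degree of $P_{\boldsymbol b,\boldsymbol a}$ in $(p_i,q_i)$ directly.

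Before substituting for $w$, regard $w_i$ as carrying $(p,q)$-degree one, since $w_i=x_ip_i+y_iq_i$ is linear in $(p_i,q_i)$. With this convention:
\begin{itemize}
\item $D_{ijk}$ is linear in each of the rows $\ell_i,\ell_j,\ell_k$, so it has degree $1$ in $(p_i,q_i)$ when $i\in\{i,j,k\}$, and degree $0$ otherwise.
\item $C_{ij}=p_i(x_j-x_i)+q_i(y_j-y_i)$ has degree $1$ in $(p_i,q_i)$ and degree $0$ in every other block.
\end{itemize}
Hence $P_{\boldsymbol b,\boldsymbol a}$ is homogeneous in $(p_i,q_i)$ of degree
$$
d_{\boldsymbol b}(i)+\sum_{j\neq i}a_{ij}.
$$
The admissibility condition~\eqref{admissibility_conditions} gives
$$
d_{\boldsymbol b}(i)+\sum_{j\neq i}a_{ij}=3+\sum_{j\neq i}a_{ji}\geq3.
$$
The substitution $w_i=x_ip_i+y_iq_i$ preserves this homogeneity, so every monomial of $P_{\boldsymbol b,\boldsymbol a}$ has $r_i+s_i\geq3$ in each block. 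Taking linear combinations, the same holds for every monomial of every $F\in\mathcal P_{n,-1}$. Applying $\Phi_n$ then shows that $\Phi_n(F)$ involves only moments with $r+s\geq3$, that is, $\mathcal{MI}_n\subset\mathcal{DP}_{\geq3}$.

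The main point requiring care is that the spanning result is quoted only for $n\geq3$; this is exactly the range in the statement, so the argument applies as written. A second point is that the homogeneity must survive the substitution for $w$, which it does because $w_i$ is linear in $(p_i,q_i)$.

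If one preferred to avoid Theorem~3.2 of \cite{B-2026}, there is a fallback using the Euler-type operators $\mathscr L_3^{[n]},\mathscr L_4^{[n]}$ together with the translation operators. These show that $F$ decomposes into block-bihomogeneous pieces, and low-degree pieces could then be excluded by an argument analogous to that of Theorem~\ref{thm:low-point-relative-invariants}. The admissible-monomial route is much more direct, so I would use it.
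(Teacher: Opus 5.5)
Your proof is correct and follows the paper's argument: you compute the $(p_i,q_i)$-degree of an admissible monomial as $d_{\boldsymbol b}(i)+\sum_{j\neq i}a_{ij}=3+\sum_{j\neq i}a_{ji}\geq3$, then transfer the bound to $\mathcal{MI}_n$ via the spanning theorem for $\mathcal P_{n,-1}$ and Theorem~\ref{thm:moment-completeness-formal}. Your explicit checks that $D_{ijk}$ and $C_{ij}$ are block-homogeneous, and that the substitution $w_i=x_ip_i+y_iq_i$ preserves this, spell out a step the paper leaves implicit; the fallback argument is not needed.
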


\begin{proof}
For an admissible monomial~\eqref{Pba}, the degree in the derivatives
$p_i,q_i$ equals
$$
d_{\boldsymbol b}(i)+\sum_{j\neq i}a_{ij}
=
3+\sum_{j\neq i}a_{ji}\geq3,
$$
where~\eqref{admissibility_conditions} has been used.
Thus, each moment factor in its image under
$\Phi_n$ has $r+s\geq3$. The assertion follows from the description
of $\mathcal P_{n,-1}$ by admissible monomials and
Theorem~\ref{thm:moment-completeness-formal}.
\end{proof}

On the entire algebra $\mathcal{DP}$, the integral realization~\eqref{Phi-n} is not
injective. For example,
we have $$m_{0,0,1,0}(f)=\frac12\iint_{\mathbb{R}^2} \partial_x(f^2)\,dx\,dy=0.$$
However, as the following theorem shows, there are no universal
polynomial identities on the subalgebra from
Lemma~\ref{minimum-derivative-degree}.

\begin{theorem}
\label{moment-functional-independence}
Let $P_1,\ldots,P_N\in\mathbb R[x,y,p,q]$ be linearly
independent, and suppose that each of their monomials has degree in $p,q$
at least two. Then the functionals
$$
M_i(f)
=
\iint_{\mathbb R^2}
f(x,y)P_i(x,y,p,q)\,dx\,dy,
\qquad i=1,\ldots,N,
$$
are algebraically independent on the class of twice-differentiable
functions $f$ with compact support.
In particular, the integral realization of the algebra
$\mathcal{DP}_{\geq3}$ is injective.
\end{theorem}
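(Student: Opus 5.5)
The strategy is to realize each functional $M_i$ through a localized family of test functions. On such a family the $M_i$ become, to leading order, point evaluations of the kernels $P_i$. The algebraic independence of the $P_i$ as functions of jets then transfers to the functionals.

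Suppose a nonzero polynomial $R(t_1,\ldots,t_N)$ satisfies $R(M_1(f),\ldots,M_N(f))=0$ for all admissible $f$. The key construction is a superposition of many sharply localized bumps with prescribed first jets. Fix a smooth compactly supported profile $\psi$ on $\mathbb R^2$. For points $c_k=(x_k,y_k)$, covectors $(p_k,q_k)$ and amplitudes $\lambda_k$, consider
$$
f_\varepsilon(x,y)=\sum_{k=1}^{K}\lambda_k\,\varepsilon^{-1}\bigl(p_k(x-x_k)+q_k(y-y_k)+\varepsilon\bigr)\psi\bigl((x-c_k)/\varepsilon\bigr),
$$
or a similar profile, arranged so that on the support of the $k$-th bump the gradient is approximately $(p_k,q_k)$. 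The scaling should be chosen so that the dominant contribution of the $k$-th bump to $M_i(f_\varepsilon)$ is proportional to $P_i(x_k,y_k,p_k,q_k)$, up to a factor depending on $\varepsilon$ and on $\psi$.

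The hypothesis that each monomial has degree at least two in $p,q$ is exactly what removes the boundary-type terms. For the integrands $f\,p^a q^b$ with $a+b\ge 2$, the integral over a bump does not vanish identically. This contrasts with the degree-one integrals $f f_x=\frac12\partial_x(f^2)$, which integrate to zero. After choosing the profile carefully, $\int f\,p^a q^b$ over a bump is a nonzero multiple of $p_k^a q_k^b$ times a weight depending only on $a+b$. Homogeneity in the derivative degree can then be separated by scaling the amplitude. The plan is therefore the following.

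1. Reduce to the case where each $P_i$ is homogeneous in $(p,q)$. Replacing $f$ by $\lambda f$ multiplies $M_i$ by $\lambda^{1+\deg}$, and a polynomial relation splits into components.
2. Compute $M_i(f_\varepsilon)=\sum_k w_k\,P_i(z_k)+o(1)$ with explicit nonzero weights $w_k$ tied to the amplitudes.
3. Let $K$ be large and let the $z_k$ range freely.

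The limit relation then reads $R\bigl(\sum_k w_k P_1(z_k),\ldots,\sum_k w_k P_N(z_k)\bigr)=0$ for all choices. Linear independence of the $P_i$ implies that the vectors $(P_1(z),\ldots,P_N(z))$ span $\mathbb R^N$. Hence the sums $\sum_k w_k P(z_k)$ fill an open subset of $\mathbb R^N$, which forces $R=0$.

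The main obstacle is step 2. The profile must be designed so that the localized integral of $f\,p^aq^b$ is genuinely nonzero and asymptotically factorizes as $p_k^aq_k^b$ times a constant. The issue is that the gradient of a bump also contains the derivative of $\psi$. My first attempt would be to take $f$ near $c_k$ equal to $\lambda(1+\ell_k(x-c_k))\chi$, with the linear part dominating on a large plateau of the cutoff $\chi$. On the plateau the gradient is exactly $\lambda(p_k,q_k)$. The cutoff region contributes lower order once the plateau width and the transition width are scaled appropriately, since the transition annulus can be made of small area relative to the plateau.

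The final claim follows from this. Every moment generator of $\mathcal{DP}_{\geq3}$ has derivative degree $r+s\ge3\ge2$, and distinct monomials are linearly independent. Therefore $\operatorname{ev}$ is injective on $\mathcal{DP}_{\geq3}$.
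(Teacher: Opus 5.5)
Your step 2 carries the entire proof, and the mechanism you propose for it does not work.

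With $f=\lambda\bigl(1+\ell_k\cdot(x-c_k)\bigr)\chi$, the function must drop from height about $\lambda$ to $0$ across the transition layer of width $\delta$. So there $|\nabla f|\sim\lambda/\delta$, which is dominated by $\lambda\nabla\chi$, not by $\lambda\ell_k$. Take a kernel homogeneous of degree $d\geq2$ in $(p,q)$ and $\delta\leq\rho$:
\begin{itemize}
\item the layer contributes on the order of $\lambda\cdot(\lambda/\delta)^d\cdot\rho\delta=\lambda^{d+1}\rho\,\delta^{1-d}$;
\item the plateau contributes $\lambda^{d+1}|\ell_k|^d\rho^2$;
\item when also $|\ell_k|(\rho+\delta)\ll1$ (needed for the affine factor to stay near $1$), their ratio is $(|\ell_k|\rho)^{-1}(|\ell_k|\delta)^{1-d}\gg1$.
\end{itemize}
A thinner layer only makes this worse when $d\geq2$. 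A similar count in the remaining regimes shows the layer is never of lower order.

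Your first ansatz has the same defect. To leading order, the $k$-th bump contributes $\lambda_k^{d+1}\varepsilon^{2-d}\int(1+\ell_k\cdot u)\psi(u)\,P_i\bigl(c_k,\ell_k\psi+(1+\ell_k\cdot u)\nabla\psi\bigr)\,du$. The $\nabla\psi$ term enters at full strength, so this is not a multiple of $P_i(z_k)$.

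The underlying obstruction is $\int f\nabla f\,dx\,dy=0$: the $f$-weighted distribution of $\nabla f$ always has vanishing first moments. A bump can imitate a point mass at $(p_k,q_k)$ in moments of order $\geq2$ only if the compensating mass sits on a large region of small slope. One example is a ridge rising with slope $\ell_k$ over a length $L_1$ and decaying over a length $L_2\gg L_1$, with a wide transverse cutoff; its decaying part is suppressed by $(L_1/L_2)^{d-1}$. Nothing of this kind is in your plan, so the expansion $M_i(f_\varepsilon)=\sum_kw_kP_i(z_k)+o(1)$ is unsupported.

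A secondary point: if the normalizing factor depends on $d_i$ (as $\varepsilon^{2-d_i}$ does), then $R(M(f_\varepsilon))=0$ does not pass directly to $R\bigl(\sum_kw_kP(z_k)\bigr)=0$. You would need to isolate the part of $R$ of lowest order in $\varepsilon$.

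The paper avoids the problem by localizing the perturbation instead of the function. The first variation is $DM_i(f)[h]=\int E_i(j^2f)\,h\,dx\,dy$, where $E_i$ is the Euler--Lagrange expression of $fP_i$. It tends to the point value $E_i(\theta_j)$ when $h=h_j$ is a nonnegative unit-mass bump shrinking to a point. No derivatives of $h_j$ survive the integration by parts, so no boundary layer arises.

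The degree hypothesis is used only to show that $E_1,\ldots,E_N$ are linearly independent on the second-jet space. In the Euler--Lagrange expression of $fP$, the coefficients of $f_{xx},f_{xy},f_{yy}$ are $-fP_{pp},-2fP_{pq},-fP_{qq}$.

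A single $f_*$ realizing suitable jets $\theta_1,\ldots,\theta_N$ at distinct points then makes the Jacobian of $t\mapsto\bigl(M_i(f_*+\sum_jt_jh_j)\bigr)_i$ at $0$ nonsingular. The inverse function theorem then gives an open subset of the image, on which $R$ must vanish. Your step 3 is essentially this endgame. What you lack is a valid source of $N$ independent localized directions, and the variational one is the simplest.
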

\begin{proof}
For the proof, we use standard methods of the calculus of variations for functionals depending on the function $f$ and its first derivatives.
Let $E_i$ denote
$$
E_i
=
P_i
-
D_x\!\left(f\frac{\partial P_i}{\partial p}\right)
-
D_y\!\left(f\frac{\partial P_i}{\partial q}\right).
$$
the Euler--Lagrange expression for the integrand $fP_i$.
Then, after simplification and integration by parts, the first variation of the functional $M_i$ takes the form
$$
DM_i(f)[h]
=
\int_{\mathbb R^2}
E_i(j^2f)\,h\,dx\,dy,
$$
where $j^2f=(x,y,f,p,q,f_{xx},f_{xy},f_{yy})$
is the second jet of the function $f$.

We first prove that $E_1,\ldots,E_N$ are linearly independent
as functions on the second-jet space.
Suppose that $\sum_i c_iE_i=0$, and let $P=\sum_i c_iP_i$.
Then the Euler--Lagrange expression for $fP$ vanishes identically.
Its coefficients of $f_{xx},f_{xy},f_{yy}$ are
$-fP_{pp}$, $-2fP_{pq}$, and $-fP_{qq}$, respectively.
Since the equality holds for all jet values,
we have $P_{pp}=P_{pq}=P_{qq}=0$. Thus, $P$ has degree
at most one in the variables $p,q$.
However, all monomials of $P$ have degree in $p,q$
at least two, so $P=0$.
The linear independence of the $P_i$ now gives
$c_1=\cdots=c_N=0$.

Thus, the vectors of values
$(E_1(\theta),\ldots,E_N(\theta))$ span
$\mathbb R^N$. Choose jets $\theta_1,\ldots,\theta_N$
such that the matrix
$$
A=\bigl(E_i(\theta_j)\bigr)_{i,j=1}^{N}
$$
is nonsingular.
Since the determinant of $A$
depends continuously on the jet coordinates and is nonzero,
it remains nonzero under sufficiently small perturbations.
Therefore, the base points $(x_j,y_j)$ can be chosen to be pairwise distinct
without affecting the nonsingularity of the matrix.

These jets can be realized by a single twice-differentiable function
$f_*$.
Indeed, near each base point $(x_j,y_j)$, choose
a quadratic polynomial whose second jet at that point
coincides with the prescribed jet $\theta_j$.
Next, multiply this polynomial by a smooth function $\chi_j$
that equals $1$ in a neighborhood of $(x_j,y_j)$
and equals $0$ outside a slightly larger neighborhood.
Multiplication by $\chi_j$ therefore does not change the value of the polynomial
or its derivatives up to second order at $(x_j,y_j)$.
Since the base points are pairwise distinct, these neighborhoods can
be chosen to be pairwise disjoint. The sum of the resulting localized
polynomials gives a compactly supported function $f_*$ such that
$$
j^2f_*(x_j,y_j)=\theta_j,
\qquad j=1,\ldots,N.
$$

Now choose nonnegative twice-differentiable functions
$h_j$ with unit integral
and supports in small neighborhoods of the corresponding points.
By the continuity of $E_i(j^2f_*)$, as these
supports shrink, we have
$$
DM_i(f_*)[h_j]
=
\int_{\mathbb R^2}E_i(j^2f_*)h_j\,dx\,dy
\longrightarrow E_i(\theta_j).
$$
Thus, the supports can be chosen so that the matrix
$B=(DM_i(f_*)[h_j])_{i,j=1}^{N}$ is nonsingular.

Consider the map
$$
\Psi:\mathbb R^N\longrightarrow\mathbb R^N,
\qquad
\Psi(t)
=
\left(
M_i\left(f_*+\sum_{j=1}^{N}t_jh_j\right)
\right)_{i=1}^{N}.
$$
It is polynomial in $t$, and its Jacobian matrix
at zero equals $B$. By the inverse function theorem,
the image of $\Psi$ contains an open subset of $\mathbb R^N$.

If there were a polynomial $R$ such that
$R(M_1(f),\ldots,M_N(f))=0$ for all admissible $f$,
it would vanish on this open subset.
Therefore, $R$ would be the zero polynomial, proving
the algebraic independence of the functionals.

Finally, every polynomial in $\mathcal{DP}_{\geq3}$
depends on only finitely many moment
generators. Their distinct monomial kernels
$x^\alpha y^\beta p^rq^s$, $r+s\geq3$, satisfy
the hypotheses of the assertion just proved. Thus, no nonzero
element of $\mathcal{DP}_{\geq3}$ is realized
as the zero functional, that is, the integral realization
of this algebra is injective.
\end{proof}

Thus, the passage from formal moment variables to their
integral definitions does not produce additional
algebraic relations among the constructed
invariants.

\section{Low-Order Projective Differential Moment Invariants}
\label{low_order_invariants}

Theorem~\ref{thm:moment-completeness-formal} reduces the construction
of moment invariants to the description of symmetric relative invariants of weight $-1$.
In this section, we consider explicit constructions of moment
degrees $3$ and $4$.

\subsection{Moment invariants of degrees $1$ and $2$}

Before turning to the first nontrivial constructions
of moment degree $3$, we consider the two lower degrees.
Theorem~\ref{thm:low-point-relative-invariants} shows
that the corresponding spaces of relative invariants of weight $-1$
for one and two first jets are zero, which immediately
leads to the following assertion.
\begin{theorem}
There are no nonzero projective differential moment
invariants of moment degrees $1$ and $2$:
$$
\mathcal{MI}_1=\mathcal{MI}_2=\{0\}.
$$
\end{theorem}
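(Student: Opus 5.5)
The plan is to reduce the statement to Theorem~\ref{thm:low-point-relative-invariants}. The reduction goes through the polarization correspondence. Theorem~\ref{thm:moment-completeness-formal} is stated for every $n\geq1$, so it applies directly to $n=1$ and $n=2$. Its proof uses only Lemma~\ref{lem:polarization-identities} and the characterization~\eqref{relative-characterization}, neither of which depends on $n\geq3$.

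Let $n\in\{1,2\}$ and $I\in\mathcal{MI}_n$. Put $F=\operatorname{Pol}_n(I)$, which lies in $R_n^{S_n}$. The second identity in~\eqref{polarization-equivariance} gives $\mathscr L_k^{[n]}F=\operatorname{Pol}_n(\mathscr D_kI)=0$ for $k=1,\ldots,8$. By~\eqref{relative-characterization}, this means $F\in\mathcal P_{n,-1}$. Theorem~\ref{thm:low-point-relative-invariants} says $\mathcal P_{1,-1}=\mathcal P_{2,-1}=0$, so $F=0$. Identity~\eqref{Phi-Pol-identity} then gives $I=\Phi_n(\operatorname{Pol}_n(I))=\Phi_n(0)=0$. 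Hence $\mathcal{MI}_1=\mathcal{MI}_2=\{0\}$.

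The step most worth checking is that~\eqref{relative-characterization} remains valid for $n=1,2$. The direction used here runs from the infinitesimal equations to the group law. It relies only on integrating along the one-parameter subgroups $\exp(tX_k)$ and on the connectedness of the group, and neither argument involves $n$. So this step should cause no difficulty.

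It would also be enough to stay entirely at the Lie algebra level. The proof of Theorem~\ref{thm:low-point-relative-invariants} only uses the equations $\mathscr L_k^{[n]}F=0$, which is exactly what polarization supplies. So this route sidesteps any concern about the group action altogether.
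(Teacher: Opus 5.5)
Your proposal is correct and is essentially the paper's own proof: polarize $I$, use $\operatorname{Pol}_n(\mathscr D_k I)=\mathscr L_k^{[n]}\operatorname{Pol}_n(I)$ to place $\operatorname{Pol}_n(I)$ in $\mathcal P_{n,-1}$, invoke $\mathcal P_{1,-1}=\mathcal P_{2,-1}=0$, and conclude $I=\Phi_n(\operatorname{Pol}_n(I))=0$. Your remarks that the infinitesimal characterization does not depend on $n$, and that the argument could stay entirely at the Lie algebra level, make explicit a step the paper leaves implicit.
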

\begin{proof}
Let $I\in\mathcal{MI}_n$, where $n=1$ or $n=2$.
By the property of polarization that intertwines the corresponding
Lie algebra actions,
$$
\operatorname{Pol}_n(\mathscr D_k I)
=
\mathscr L_k^{[n]}\operatorname{Pol}_n(I),
\qquad k=1,\ldots,8.
$$
Since $I$ is a moment invariant,
$\mathscr D_k I=0$ for all $k$, and hence
$$
\mathscr L_k^{[n]}\operatorname{Pol}_n(I)=0,
\qquad k=1,\ldots,8.
$$
Thus,
$
\operatorname{Pol}_n(I)\in\mathcal P_{n,-1}.
$
By Theorem~\ref{thm:low-point-relative-invariants},
$
\mathcal P_{1,-1}=\mathcal P_{2,-1}=0,
$
so
$
\operatorname{Pol}_n(I)=0.
$
Finally, the identity
$
\Phi_n\circ\operatorname{Pol}_n
=
\operatorname{id}_{\mathcal{DP}_n}
$
gives
$
I
=
\Phi_n(\operatorname{Pol}_n(I))
=
0.
$
Therefore, there are no nonzero moment invariants of degrees $1$ and $2$.
\end{proof}

\subsection{Adapted $D$-moments}

We use the $D$-moments
$\phi_{\alpha\beta rst}$ introduced
in Subsection~\ref{subsec:degree3_results}.
The values of the $D$-order and the numbers
of monomials given below characterize particular
$D$-moment representations of the invariants.
We also use the spaces $W_a$,
$a\geq0$, defined
in Subsection~\ref{subsec:degree3_results}.

We regard the space $W_a$ as a subspace of
$\mathcal{DP}_1$, identifying each symbol
$\phi_{\alpha\beta rst}$ with its expansion
in the moments $m_{\alpha,\beta,r,s}$.
At the same time, the bound
$$
\operatorname{ord}_D\phi_{\alpha\beta rst}
=
\alpha+\beta+r+s+t
\leq a+(a+3)=2a+3
$$
applies specifically to this $D$-moment notation.
After expanding $w=xp+yq$, the order of the moments
$m_{\alpha,\beta,r,s}$ in the corresponding expansion
may be higher.

The vector space $W_0$ is spanned by the ten moments
$\phi_{00ijk}$, $i+j+k=3$.
The coefficients of the ternary cubic from
Section~\ref{sec:main_results} are related to them by
$$
\mu_{ijk}=(-1)^k\phi_{00ijk}.
$$
Thus, both systems of moments span the same
space $W_0$.

\subsection{Moment degree three} \label{subsec:degree_three}

We first prove Theorem~\ref{all_degree3}, which gives
a general description of invariants of moment degree $3$.

\begin{proof}[Proof of Theorem~\ref{all_degree3}]
By the theorem on the structure of the space
$\mathcal P_{n,-1}$, each of its elements is a linear
combination of admissible monomials
$
P_{\boldsymbol b,\boldsymbol a}.
$
For $n=3$, the set $\mathcal T_3$ contains only the triple
$\{1,2,3\}$. Therefore, the first admissibility condition
$
\sum_{I\in\mathcal T_3}b_I=3
$
gives
$
b_{123}=3.
$
Thus, every admissible monomial contains the factor
$D_{123}^{\,3}$. Moreover,
$$
d_{\boldsymbol b}(1)
=
d_{\boldsymbol b}(2)
=
d_{\boldsymbol b}(3)
=3,
$$
so the second admissibility condition takes the form
$
\partial_{\boldsymbol a}(i)=0,
\qquad i=1,2,3.
$
This means that, for each index, the total number
of its occurrences as the first index in the factors $C_{ij}$
equals the total number of its occurrences as the second index.
Therefore, the corresponding monomial in $C_{ij}$ decomposes
into a product of closed cycles. For three indices, these
cycles are
$
\mathsf X,
\mathsf Y,
\mathsf Z,U,V;
$
see~\eqref{dva} and~\eqref{try}.
Thus, every element of $\mathcal P_{3,-1}$ has the form
\begin{equation}
\label{DD}
D_{123}^{\,3}Q(\mathsf X,\mathsf Y,\mathsf Z,U,V)
\end{equation}
for some polynomial $Q$.

It remains to account for symmetry under permutations
of the three points. Interchanging two indices changes the sign of the determinant
$D_{123}$, and hence also changes the sign of $D_{123}^{\,3}$.
Therefore, the product~\eqref{DD}
is symmetric under permutations of the points if and only
if $Q$ changes sign under every odd
permutation, that is, if it is antisymmetric.

By Theorem~\ref{thm:moment-completeness-formal},
the map $\Phi_3$ establishes a one-to-one
correspondence between symmetric elements of
$\mathcal P_{3,-1}$ and moment invariants in
$\mathcal{MI}_3$. Therefore, every $I\in\mathcal{MI}_3$ has the form
$$
I=\Phi_3\left(
D_{123}^{\,3}Q(\mathsf X,\mathsf Y,\mathsf Z,U,V)
\right),
$$
where $Q$ is antisymmetric, and every such expression
is a moment invariant.
\end{proof}

We also prove Theorem~\ref{J-family}. To this end,
we specify which of the admissible monomials described above
correspond to the space $\operatorname{Sym}^3(W_a)$.

\begin{proof}[Proof of Theorem~\ref{J-family}]
To count degrees, we use the formal expression
$$
C_{ij}=p_ix_j+q_iy_j-w_i,
\qquad
\ell_i=(p_i,q_i,-w_i),
$$
in which $w_i$ is temporarily an independent symbol.
Each factor $C_{ij}$ is homogeneous of degree $1$
in the components of $\ell_i$ and has degree
at most $1$ in the coordinates $(x_j,y_j)$.
The coordinate degree bounds below
refer to this formal expression.
After substituting $w_i=x_ip_i+y_iq_i$,
the degree in the components of $\ell_i$ becomes
the same total degree in the derivatives $p_i,q_i$,
while the coordinate degrees may increase.

Consider an admissible monomial
$$
D_{123}^{\,3}\prod_{i\ne j}C_{ij}^{b_{ij}}.
$$
For each $i=1,2,3$, set
$$
r_i=\sum_{j\ne i}b_{ij},
\qquad
s_i=\sum_{j\ne i}b_{ji}.
$$
Here, $r_i$ is the total number of occurrences of the index $i$
as the first index in the factors $C_{ij}$, while $s_i$ is
the total number of its occurrences as the second index.
As shown above, the admissibility condition for $n=3$
gives
$
r_i=s_i,
\qquad i=1,2,3.
$

The factor $D_{123}^{\,3}$ has degree $3$ in
the components of each covector $\ell_i$.
Therefore, at point $i$, the degree of the entire monomial
in the components of $\ell_i$ equals
$
3+r_i,
$
while its degree in the coordinates $(x_i,y_i)$ does not exceed
$s_i$. Each operator $\mathcal L_k^{(i)}$ preserves
the total degree in $(p_i,q_i)$, so the space
$\mathcal P_{3,-1}$ decomposes according to the tridegree
in the derivatives at the three points. The map $\Phi_3$
sends the symmetric components of this decomposition
to components of $\mathcal{DP}_3$ determined by
the multisets of derivative degrees
of the three moment factors. These components
form a direct sum.

Since all elements of $W_a$ have derivative degree
$a+3$, a symmetric kernel whose
image belongs to $\operatorname{Sym}^3(W_a)$
can correspond only to the tridegree
$(a+3,a+3,a+3)$. Thus, in its expansion
in admissible monomials, it suffices to consider
only monomials for which
$$
r_i=s_i=a,\qquad i=1,2,3.
$$
Conversely, for such monomials, the formal
coordinate degree at each point
does not exceed $a$, and the degree in the components
of $\ell_i$ equals $a+3$, so their images
belong to $\operatorname{Sym}^3(W_a)$.

Consider the monomial
$$
\mathsf X^{\rho_1}
\mathsf Y^{\rho_2}
\mathsf Z^{\rho_3}U^uV^v.
$$
The conditions $s_1=s_2=s_3=a$ give
$$
\rho_1+\rho_2+u+v
=
\rho_1+\rho_3+u+v
=
\rho_2+\rho_3+u+v
=a.
$$
Hence,
$$
\rho_1=\rho_2=\rho_3=\rho.
$$
Using the relation
$\mathsf X\mathsf Y\mathsf Z=UV$, we obtain
$$
\mathsf X^\rho\mathsf Y^\rho\mathsf Z^\rho U^uV^v
=
U^{u+\rho}V^{v+\rho}.
$$
Moreover,
$$
(u+\rho)+(v+\rho)=a.
$$
Thus, all such monomials have the form
$$
U^{a-k}V^k,
\qquad k=0,\ldots,a.
$$

Under an odd permutation of the three points, the quantities $U$ and $V$
are interchanged, while $D_{123}^{\,3}$ changes sign.
Therefore, symmetric kernels correspond to the antisymmetric part
of the space of homogeneous polynomials of degree $a$ in $U,V$.
A basis of this part is given by
$$
U^{a-k}V^k-U^kV^{a-k},
\qquad
k=0,\ldots,
\left\lfloor\frac{a-1}{2}\right\rfloor.
$$
Applying $\Phi_3$ and Theorem~
\ref{thm:moment-completeness-formal}, we obtain a basis
of the corresponding space of moment invariants:
$$
J_{a,k}
=
\Phi_3\left[
D_{123}^{\,3}
\left(
U^{a-k}V^k-U^kV^{a-k}
\right)
\right].
$$
\end{proof}

The constructed family of invariants $J_{a,k}$ allows us
to establish an important structural property
of the algebra of moment invariants: it has no
finite generating system.

\begin{theorem}
\label{thm:nonfinite-generation}
The space $\mathcal{MI}_3$ is infinite-dimensional.
The algebra $\mathcal{MI}$ is not finitely generated
as an $\mathbb R$-algebra.
\end{theorem}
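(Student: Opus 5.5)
Infinite-dimensionality of $\mathcal{MI}_3$ follows directly from Theorem~\ref{J-family}. For each $a\geq1$ the element $J_{a,0}$ is a nonzero element of $\mathcal{MI}_3$, being a basis vector of the invariants in $\operatorname{Sym}^3(W_a)$. It lies in the component of $\mathcal{DP}_3$ in which all three moment factors have derivative degree $a+3$. The action of $\mathfrak{sl}_3$ preserves the derivative degree $r+s$ of each generator, so $\mathcal{DP}_3$ splits as a direct sum of components indexed by the multisets of derivative degrees of the three factors. Hence the $J_{a,0}$, $a\geq1$, lie in pairwise distinct direct summands and are linearly independent. This shows that $\dim\mathcal{MI}_3=\infty$.

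For non-finite generation, I use the grading of $\mathcal{MI}$ by moment degree together with the vanishing results. Suppose $\mathcal{MI}$ were generated by finitely many elements. Since the $\mathfrak{sl}_3$-action preserves moment degree, $\mathcal{MI}=\bigoplus_n\mathcal{MI}_n$ is a graded subalgebra. We may therefore replace the generators by their homogeneous components, still a finite set. By the theorem that $\mathcal{MI}_1=\mathcal{MI}_2=0$, every homogeneous generator of positive degree has moment degree at least $3$, and $\mathcal{MI}_0=\mathbb R$. A product of $k\geq2$ homogeneous elements of positive degree has degree at least $6$. Consequently the degree-$3$ component of the generated algebra is just the linear span of those generators that have degree exactly $3$. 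This span is finite-dimensional, which contradicts the first part. So $\mathcal{MI}$ is not finitely generated.

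The only delicate point is the linear-independence step. One must check that distinct derivative-degree components of $\mathcal{DP}_3$ really intersect trivially, and that $J_{a,0}\neq0$. The first holds because the $m_{\alpha,\beta,r,s}$ are free variables and the components are spanned by disjoint sets of monomials. For the second, Theorem~\ref{J-family} asserts a basis, but I would also justify nonvanishing directly. By Theorem~\ref{thm:moment-completeness-formal}, $\Phi_3$ is injective on symmetric kernels, so it suffices that $D_{123}^3(U^a-V^a)\neq0$ in $R_3$. This is clear because $U\neq V$ as polynomials (specialize points to see $C_{12}C_{23}C_{31}\neq C_{13}C_{32}C_{21}$) and $D_{123}\neq0$.
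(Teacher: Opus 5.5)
Your proof is correct and follows the paper's argument essentially step by step. The $J_{a,0}$ are separated by the derivative-degree grading (the paper uses the total auxiliary degree $3(a+3)$, you use the multiset of factor degrees, which amounts to the same thing), and the degree-$3$ component of a finitely generated graded algebra with $\mathcal{MI}_1=\mathcal{MI}_2=0$ would be finite-dimensional. One small point in your extra nonvanishing check: $U\neq V$ alone does not give $U^a\neq V^a$ for even $a$, so you need $U\neq \pm V$, which the same specialization of the points immediately shows.
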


\begin{proof}
Assign to the moment generator
$m_{\alpha,\beta,r,s}$ the auxiliary degree $r+s$,
and extend it to moment monomials by adding
the degrees of their factors.

By Theorem~\ref{J-family}, for every $a\geq1$,
the invariant $J_{a,0}$ is nonzero and belongs to
$\mathcal{MI}_3$.
Each of its moment factors has derivative degree
$a+3$, so $J_{a,0}$ is homogeneous
of auxiliary degree $3(a+3)$.
These degrees are pairwise distinct, so the invariants
$J_{a,0}$, $a\geq1$, are linearly independent.
Thus,
$$
\dim_{\mathbb R}\mathcal{MI}_3=\infty.
$$

Suppose that $\mathcal{MI}$ is finitely generated.
Since the operators $\mathscr D_k$ preserve
moment degree, each homogeneous component
of a moment invariant is also an invariant.
Replacing a finite generating system
with the homogeneous components of its generators gives
a finite system of homogeneous generators.

However, $\mathcal{MI}_0=\mathbb R$ and
$\mathcal{MI}_1=\mathcal{MI}_2=0$.
Therefore, all nonzero generators of positive degree
have moment degree at least $3$.
Products of two or more such factors
have degree at least $6$.
Thus, $\mathcal{MI}_3$ would have to be the linear
span of finitely many generators
of degree $3$, contradicting its
infinite dimensionality.
\end{proof}

We next consider several explicit examples of the simplest invariants of degree $3$
:
\begin{align*}
I^{(3)}_5
&=
\Phi_3\left(D_{123}^{\,3}(U-V)\right)
=
J_{1,0},
\\
I^{(3)}_{7,1}
&=
\Phi_3\left(D_{123}^{\,3}(U-V)(\mathsf X+\mathsf Y+\mathsf Z)\right),
\\
I^{(3)}_{7,2}
&=
\Phi_3\left(D_{123}^{\,3}(U^2-V^2)\right).
\end{align*}
Table~\ref{n3-complexity} gives the number
of monomials after expanding the kernels and collecting like terms,
as well as the number of distinct $D$-moments used.
No additional linear relations between $D$-moments
are applied in this process. Thus, the reported numbers
characterize these particular representations rather than
the minimum possible complexity of the invariants.

\begin{table}[t]
\centering
\caption{Complexity of the $D$-moment representations
of invariants of moment degree $3$.}
\label{n3-complexity}
\begin{tabular}{c|c|r|r}
\hline
Invariant & $D$-order & Monomials & Distinct $D$-moments
\\ \hline
$I^{(3)}_5$     &5&342 &45\\
$I^{(3)}_{7,1}$ &7&5082&171\\
$I^{(3)}_{7,2}$ &7&3272&126\\
\hline
\end{tabular}
\end{table}

\subsection{Moment degree four}

For moment degree $4$, the basic construction is the kernel
\begin{equation}
\label{P0}
P_0=D_{123}D_{124}D_{134}D_{234}.
\end{equation}
Each index occurs in exactly three determinants,
so, by the conditions~\eqref{admissibility_conditions},
$P_0$ has relative weight $-1$.
Moreover, each adjacent transposition changes the signs
of two determinants and interchanges the others.
Thus, $P_0$ is symmetric in the point labels.

The corresponding moment invariant takes the form
$$
I^{(4)}_3
=
\Phi_4(P_0)
=
24S,
$$
where $S$ is normalized according to
the polynomial~\eqref{eq:Aronhold_S}.
Its displayed $D$-moment representation has order $3$
and contains $25$ monomials in $10$ distinct moments.

The determinant representations of the invariants $S$ and $T$
are given in Subsection~\ref{subsec:degree3_results}.
Both use moments from $W_0$,
but have different moment degrees:
$4$ for $S$ and $6$ for $T$.

New examples of invariants of $D$-order $5$ can be obtained by multiplying $P_0$
by symmetric polynomials in $C_{ij}$ that are unchanged
under projective transformations. Consider
\begin{align*}
E_2
&=
\sum_{1\leq i<j\leq4}C_{ij}C_{ji},
\\
E_3
&=
\sum_{1\leq i<j<k\leq4}
\left(
C_{ij}C_{jk}C_{ki}
+
C_{ik}C_{kj}C_{ji}
\right),
\\
E_4
&=
\sum_{(i,j,k)\in\operatorname{Perm}(2,3,4)}
C_{1i}C_{ij}C_{jk}C_{k1},
\end{align*}
where $\operatorname{Perm}(2,3,4)$ denotes the six
permutations of the triple $(2,3,4)$.

All of $E_2,E_3,E_4$ are symmetric absolute
invariants. Accordingly, the kernels $P_0E_2$, $P_0E_3$,
and $P_0E_4$ remain symmetric and retain
relative weight $-1$.

We obtain three moment invariants
$$
I^{(4)}_{5,1}
=\Phi_4(P_0E_2),
\quad
I^{(4)}_{5,2}
=\Phi_4(P_0E_3),
\quad
I^{(4)}_{5,3}
=\Phi_4(P_0E_4).
$$
Table~\ref{n4-complexity} gives the number
of monomials after expanding these kernels and collecting
like terms, as well as the number of distinct
$D$-moments used.

\begin{table}[t]
\centering
\caption{Complexity of the $D$-moment representations
of invariants of moment degree $4$.}
\label{n4-complexity}
\begin{tabular}{c|c|r|r}
\hline
Invariant & $D$-order & Monomials & Distinct $D$-moments
\\ \hline
$I^{(4)}_3$     &3&25  &10\\
$I^{(4)}_{5,1}$ &5&1014&55\\
$I^{(4)}_{5,2}$ &5&2858&55\\
$I^{(4)}_{5,3}$ &5&3318&45\\
\hline
\end{tabular}
\end{table}

These numbers show that, even at a fixed
moment degree, increasing the $D$-order substantially
increases the complexity of the moment expansion.

\subsection{What about rational differential moment invariants?}

The action of the Lie algebra $\mathfrak{sl}_3$ on the moment algebra
$\mathcal{DP}$ naturally extends to its field of fractions
$
\mathcal K=\operatorname{Frac}(\mathcal{DP}),
$
and let $\mathcal K^{\mathfrak{sl}_3}$ denote the field of $\mathfrak{sl}_3$-invariants.
It turns out that, in this case, rational invariants do not form
a fundamentally broader class than polynomial invariants.

\begin{theorem}
The following equality holds:
$$
\mathcal K^{\mathfrak{sl}_3}
=
\operatorname{Frac}(\mathcal{MI}).
$$
Thus, every rational projective differential
moment invariant can be expressed as a ratio of two
polynomial moment invariants.
\end{theorem}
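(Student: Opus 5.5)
The inclusion $\operatorname{Frac}(\mathcal{MI})\subseteq\mathcal K^{\mathfrak{sl}_3}$ is immediate. Each $\mathscr D_k$ extends to $\mathcal K$ as a derivation by the quotient rule, and the quotient of two elements killed by a derivation is again killed by it. The content of the theorem is the reverse inclusion. My plan is the standard argument for \emph{connected semisimple} group or Lie algebra actions: show that an invariant fraction can be written with invariant numerator and denominator. The key input is that $\mathfrak{sl}_3$ has no nontrivial one-dimensional representations, because $[\mathfrak{sl}_3,\mathfrak{sl}_3]=\mathfrak{sl}_3$.

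The steps are as follows. Let $R=P/Q\in\mathcal K^{\mathfrak{sl}_3}$ with $P,Q\in\mathcal{DP}$ coprime; this is possible since $\mathcal{DP}$ is a polynomial ring in countably many variables, hence a UFD. Only finitely many variables occur in $P$ and $Q$. However, $\mathscr D_k$ does not preserve the subring generated by a finite set of variables, so I first observe the following. From \eqref{moment-operators}, each $\mathscr D_k$ preserves the moment degree, the derivative degree $r+s$, and the filtration by $\alpha+\beta$ (the coordinate degree rises by at most $1$). Therefore $\mathscr D_k$ maps the finite-dimensional space $V_N=\operatorname{span}\{m_{\alpha\beta rs}:\alpha+\beta\le N,\ r+s\le M\}$ into $V_{N+1}$; this family is not stable under any single $N$.

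Returning to invariance: $\mathscr D_k(P/Q)=0$ gives $Q\,\mathscr D_kP=P\,\mathscr D_kQ$. By coprimality $Q\mid\mathscr D_kQ$. Since $\mathscr D_k$ preserves all the gradings, comparing the total degree in the variables gives $\mathscr D_kQ=\lambda_kQ$ with $\lambda_k\in\mathbb R$, and likewise $\mathscr D_kP=\lambda_kP$. The map $k\mapsto\lambda_k$ extends linearly to a Lie algebra homomorphism $\mathfrak{sl}_3\to\mathbb R$. Indeed, $[\mathscr D_X,\mathscr D_Y]=\mathscr D_{[X,Y]}$ holds on generators, since $X\mapsto\mathscr L_X$ is a Lie algebra homomorphism (the divergence term is a cocycle compatible with the prolongation), and it then holds on $\mathcal{DP}$ because both sides are derivations. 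Hence $\lambda$ vanishes on $[\mathfrak{sl}_3,\mathfrak{sl}_3]=\mathfrak{sl}_3$. This can also be checked concretely on the basis: $X_1=[X_3,X_1]$ up to sign, $X_5,X_6$ are commutators, $X_3-X_4=[X_5,X_6]$ up to sign, and the remaining trace direction and $X_7,X_8$ are obtained by bracketing $X_1,X_2$ with $X_7,X_8$. So $\lambda=0$, and $P,Q\in\mathcal{MI}$.

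I expect the main obstacle to be the step $Q\mid\mathscr D_kQ\Rightarrow\mathscr D_kQ=\lambda_kQ$ with $\lambda_k$ constant. For the nilpotent-type operators $\mathscr D_1,\mathscr D_2,\mathscr D_5,\mathscr D_6$, which lower $\alpha+\beta$ or keep it, and for $\mathscr D_7,\mathscr D_8$, which raise it, a pure polynomial-degree comparison gives only $\deg(\mathscr D_kQ)\le\deg Q$ in the variables. So $\mathscr D_kQ=cQ$ with $c$ a polynomial of degree $0$, i.e.\ a constant, because $\mathscr D_k$ is linear on generators and thus preserves the ordinary polynomial degree. I will verify carefully that this degree argument is legitimate, noting that it uses only homogeneity of $\mathscr D_k$ as a degree-$0$ derivation. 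I will also check that coprimality is not destroyed by choosing $P,Q$ inside a finite set of variables, which is fine since the UFD property holds in the whole of $\mathcal{DP}$. Finally, I will confirm the bracket relations of the $\mathscr D_k$ either from the general intertwining in Lemma~\ref{lem:polarization-identities} or by direct computation from \eqref{moment-operators}.
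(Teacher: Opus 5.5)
Your proposal is correct and follows the paper's proof essentially step for step. The steps are: a coprime representation $P/Q$ in the factorial ring $\mathcal{DP}$; the divisibility $Q\mid\mathscr D_XQ$, upgraded by degree comparison to $\mathscr D_XQ=\lambda_XQ$ and $\mathscr D_XP=\lambda_XP$ with $\lambda_X\in\mathbb R$; the relation $[\mathscr D_X,\mathscr D_Y]=\mathscr D_{[X,Y]}$; and $\mathfrak{sl}_3=[\mathfrak{sl}_3,\mathfrak{sl}_3]$, which forces $\lambda=0$. The $V_N$ filtration digression is unnecessary. In your optional basis check, bracketing $X_1,X_2$ with $X_7,X_8$ yields $2X_3+X_4$, $X_3+2X_4$, $X_5$ and $X_6$, but not $X_7,X_8$ themselves; those arise as $[X_3,X_7]=X_7$ and $[X_4,X_8]=X_8$. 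This slip does not affect the argument, which only uses perfectness.
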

This assertion is analogous to the standard result
of classical invariant theory concerning the field of rational
invariants of a factorial algebra under the action of a semisimple
algebraic group~\cite{Springer1998,PopovVinberg1994}.
Since, in our case, $\mathcal{DP}$ has an infinite
set of generators and invariance is defined
directly through the action of the Lie algebra $\mathfrak{sl}_3$,
we give a short direct proof.

\begin{proof}
Let
$$
R=\frac{P}{Q}\in\mathcal K^{\mathfrak{sl}_3},
\qquad
P,Q\in\mathcal{DP},\qquad
\gcd(P,Q)=1.
$$
For every $X\in\mathfrak{sl}_3$, we have
$
\mathscr D_XR=0,
$
and hence
$
Q\,\mathscr D_XP-P\,\mathscr D_XQ=0.
$
Since $P$ and $Q$ are relatively prime, this equality
implies
$$
P\mid\mathscr D_XP,
\qquad
Q\mid\mathscr D_XQ.
$$
Thus, for some $h_X\in\mathcal{DP}$, we have
$$
\mathscr D_XP=h_XP,
\qquad
\mathscr D_XQ=h_XQ.
$$
Here, we may assume that $P,Q\neq0$, since the case
$R=0$ is immediate.
Let $\deg_m$ denote the maximum total degree
of a polynomial in the moment variables.
The operator $\mathscr D_X$ preserves each homogeneous
component, so
$\deg_m(\mathscr D_XP)\leq\deg_mP$.
If $h_X\neq0$, then
$$
\deg_m(h_XP)=\deg_mh_X+\deg_mP,
$$
and hence $\deg_mh_X=0$.
Thus, $h_X=\lambda_X\in\mathbb R$;
homogeneity of $P$ and $Q$ is not required.

The map $X\mapsto\lambda_X$ is linear.
Under the standard convention for the commutator of vector
fields, we have
$$
[X^{(1)},Y^{(1)}]=[X,Y]^{(1)},
\qquad
\operatorname{div}[X,Y]
=
X(\operatorname{div}Y)-Y(\operatorname{div}X).
$$
Therefore, the weighted operators
$\mathscr L_X=X^{(1)}+\operatorname{div}X$
satisfy
$[\mathscr L_X,\mathscr L_Y]=\mathscr L_{[X,Y]}$.
By the definition of the action on moment generators, this gives
$
[\mathscr D_X,\mathscr D_Y]
=
\mathscr D_{[X,Y]}.
$
Applying this identity to $P$, we obtain
$$
\lambda_{[X,Y]}P
=
[\mathscr D_X,\mathscr D_Y]P
=
(\lambda_Y\lambda_X-\lambda_X\lambda_Y)P
=
0.
$$
Since $P\neq0$, we have $\lambda_{[X,Y]}=0$.
Since
$
\mathfrak{sl}_3=[\mathfrak{sl}_3,\mathfrak{sl}_3],
$
and the character $\lambda$ vanishes on all commutators,
$\lambda([X,Y])=0$, it vanishes identically on the entire
algebra $\mathfrak{sl}_3$. Hence,
$
\lambda_X=0$
{for all } $X\in\mathfrak{sl}_3.$
Thus,
$
\mathscr D_XP=\mathscr D_XQ=0,
$
that is, $P,Q\in\mathcal{MI}$. Therefore,
$$
R=\frac{P}{Q}
\in\operatorname{Frac}(\mathcal{MI}).
$$
The reverse inclusion is immediate.
\end{proof}

Thus, under the definition of projective invariance
adopted in this paper through the Lie algebra action, passing
to rational functions produces no new invariants:
the field of rational invariants is the field of fractions of the algebra
of polynomial invariants.

This situation differs fundamentally from the case of the full
affine group, which has nontrivial characters. Therefore,
for affine moments, there may exist rational
invariants whose numerator and denominator are not individually
polynomial absolute invariants. This is precisely the
mechanism underlying the classical rational affine
moment invariants of Flusser--Suk~\cite{FlusserSuk1993}.
For the action of $\mathfrak{sl}_3$, this mechanism is impossible, since
$
\mathfrak{sl}_3=[\mathfrak{sl}_3,\mathfrak{sl}_3],
$
and hence this Lie algebra has no nontrivial characters.

\section*{Code availability}

The implementation is available at

\url{https://github.com/LeonidBed/projective-differential-moment-invariants}.

The repository provides routines for computing $S$, $T$,
and the degree-three invariants denoted in the code by
\texttt{I3\_cycle}, \texttt{I3\_cycle\_edge}, and
\texttt{I3\_cycle\_square}.
The implementation supports the computational use
of the moment constructions developed in this paper.

\section{Conclusion}
\label{sec:conclusion}

This paper develops an algebraic framework for describing polynomial
first-order projective differential moment
invariants. The main result is a one-to-one correspondence established for every
$n\geq 1$,
$$
\Phi_n:
(\mathcal P_{n,-1})^{S_n}
\longrightarrow
\mathcal{MI}_n,
\qquad
\Phi_n^{-1}=\operatorname{Pol}_n,
$$
between moment invariants of degree $n$ and symmetric
$n$-point polynomial relative projective
invariants of weight $-1$. Thus, the problem of constructing
moment invariants reduces to the already described
algebraic problem of constructing multipoint relative
invariants. The injectivity of the integral realization on
the subalgebra of moments arising in these constructions
ensures that the formal algebraic results carry over
to the corresponding image functionals.

Explicit results are obtained for low moment degrees.
The absence of nonzero invariants of degrees
$1$ and $2$ is proved, and a complete description of invariants of degree
$3$ is obtained through symmetrized multipoint kernels. The ten
cubic differential moments form a module
equivalent to the space of ternary cubics, so the classical
Aronhold invariants $S$ and $T$ generate projective
differential moment invariants of degrees $4$ and $6$.
In particular, this connection explains the invariant-theoretic
nature of the known 25-term expression
$\operatorname{Inv}_{2,0}$ \cite{Wang}.

The infinite dimensionality of
the space $\mathcal{MI}_3$ and the absence of
a finite generating system for the algebra
$\mathcal{MI}$ are also established. At the same time, every rational
moment invariant is a ratio of two
polynomial moment invariants.
Thus, the field of rational moment invariants
is completely determined by the algebra $\mathcal{MI}$.

Compact multipoint representations provide
a constructive passage from relative invariants
to moment polynomials and serve as a basis
for their software computation.
This connects the algebraic results obtained
with the construction of invariant image characteristics
for image analysis and pattern recognition tasks.

Further development of this approach involves
designing numerically stable methods
for computing the constructed invariants
on digital images, using
orthogonal moments, and evaluating
the discriminative power of the resulting characteristics
in image analysis and pattern recognition tasks.

\section*{Appendix}


\begin{align}
T={}&
A^2B^2C^2
-3A^2E^2H^2
-6A^2BEHC
+4A^2BH^3
+4A^2E^3C
\nonumber\\
&-6ADGBC^2
+18ADGEHC
-12ADGH^3
+12ADJBHC
-24ADJE^2C
\nonumber\\
&+12ADJEH^2
-12ADFBH^2
+6ADF BEC
+6ADFE^2H
\nonumber\\
&+6AIGBHC
-12AIGE^2C
+6AIGEH^2
+12AIJBEC
+12AIJE^2H
\nonumber\\
&-6AIFB^2C
+18AIFBEH
-24AG^2JHC
-24AIJBH^2
-12AIFE^3
\nonumber\\
&+4AG^3C^2
-12AG^2FEC
+24AG^2FH^2
+36AGJ^2EC
+12AGJ^2H^2
\nonumber\\
&+12AGJFBC
-60AGJFEH
-12AGF^2BH
+24AGF^2E^2
-20AJ^3BC
\nonumber\\
&-12AJ^3EH
+36AJ^2FBH
+12AJ^2FE^2
-24AJF^2BE
+4AF^3B^2
\nonumber\\
&+4D^3BC^2
-12D^3EHC
+8D^3H^3
+24D^2IE^2C
-12D^2IEH^2
\nonumber\\
&+12D^2GJHC
+6D^2GFEC
-24D^2J^2H^2
-12D^2IBHC
-3D^2G^2C^2
\nonumber\\
&-24G^2J^2F^2
+24GJ^4F
-12D^2GFH^2
+12D^2J^2EC
-24D^2J FBC
\nonumber\\
&-27D^2F^2E^2
+36D^2JFEH
+24D^2F^2BH
+24DI^2BH^2
-12DI^2BEC
\nonumber\\
&-12DI^2E^2H
+6DIG^2HC
-60DIGJEC
+36DIGJH^2
+18DIGFBC
\nonumber\\
&-6DIGFEH
+36DIJ^2BC
-12DIJ^2EH
-60DIJFBH
+36DIJFE^2
\nonumber\\
&+6DIF^2BE
+12DG^2JFC
-12DGJ^3C
-12DGJ^2FH
+36DGJF^2E
\nonumber\\
&-12DGF^3B
+24DJ^4H
+12DJ^2F^2B
+4I^3B^2C
+24I^2G^2EC
\nonumber\\
&-27I^2G^2H^2
-36DJ^3FE
-12I^3BEH
+8I^3E^3
-24I^2GJBC
\nonumber\\
&+36I^2GJEH
+6I^2GFBH
+12I^2J^2BH
-3I^2F^2B^2
-12DG^2F^2H
\nonumber\\
&-12I^2GFE^2
-24I^2J^2E^2
+12I^2JFBE
-12IG^3FC
+12IG^2J^2C
\nonumber\\
&+36IG^2JFH
-12IG^2F^2E
-36IGJ^3H
-12IGJ^2FE
+12IGJF^2B
\nonumber\\
&+24IJ^4E
-12IJ^3FB
+8G^3F^3
-8J^6 .
\label{eq:Aronhold_T}
\end{align}


\begin{thebibliography}{20}

\bibitem{Hu}
Hu M. K.
Visual pattern recognition by moment invariants.
\emph{IRE Transactions on Information Theory}
1962, \textbf{8}(2), 179--187.
doi:10.1109/TIT.1962.1057692.

\bibitem{FSB}
Flusser J., Suk T., Zitova B.
\emph{2D and 3D Image Analysis by Moments}.
John Wiley \& Sons, 2017.

\bibitem{SukFlusser2004}
Suk T., Flusser J.
Projective moment invariants.
\emph{IEEE Transactions on Pattern Analysis and Machine Intelligence}
2004, \textbf{26}(10), 1364--1367.
doi:10.1109/TPAMI.2004.89.

\bibitem{WangCoMoments}
Wang Y., Zhang B., Yao T.
Projective invariants of co-moments of 2D images.
\emph{Pattern Recognition}
2010, \textbf{43}(10), 3233--3242.
doi:10.1016/j.patcog.2010.05.004.

\bibitem{Wang}
Wang Y. B., Wang X. W., Zhang B., Wang Y.
Projective invariants of D-moments of 2D grayscale images.
\emph{Journal of Mathematical Imaging and Vision}
2015, \textbf{51}(2), 248--259.
doi:10.1007/s10851-014-0518-z.

\bibitem{Li2019}
Li E., Mo H., Xu D., Li H.
Image projective invariants.
\emph{IEEE Transactions on Pattern Analysis and Machine Intelligence}
2019, \textbf{41}(5), 1144--1157.
doi:10.1109/TPAMI.2018.2832060.

\bibitem{Olver2023}
Olver P. J.
Projective invariants of images.
\emph{European Journal of Applied Mathematics}
2023, \textbf{34}(5), 936--946.
doi:10.1017/S0956792522000298.

\bibitem{Bedratyuk2020}
Bedratyuk L.
2D geometric moment invariants from the point of view of the
classical invariant theory.
\emph{Journal of Mathematical Imaging and Vision}
2020, \textbf{62}(8), 1062--1075.
doi:10.1007/s10851-020-00954-9.

\bibitem{B-2025-1}
Bedratyuk L.
First order joint differential projective invariants.
\emph{Communications in Algebra}
2026, \textbf{54}(5), 1861--1880.
doi:10.1080/00927872.2025.2569449.

\bibitem{BedratyukFrames}
Bedratyuk L.
Joint projective invariants on first jet spaces of point
configurations via moving frames.
\emph{Communications in Algebra}
2026, 1--19.
doi:10.1080/00927872.2026.2638961.

\bibitem{B-2026}
Bedratyuk L.
Polynomial joint first-order differential projective invariants.
arXiv:2608.30016, 2026.

\bibitem{Aronhold1850}
Aronhold S.
Zur Theorie der homogenen Functionen dritten Grades von drei Variabeln.
\emph{J. Reine Angew. Math.}
\textbf{39} (1850), 140--159.
doi:10.1515/crll.1850.39.140.

\bibitem{Olver1993LieGroups}
Olver P. J.
\emph{Applications of Lie Groups to Differential Equations}.
2nd ed., Graduate Texts in Mathematics, Vol.~107.
New York: Springer-Verlag, 1993.
doi:10.1007/978-1-4612-4350-2.

\bibitem{SL-1}
Lie S.
\emph{Vorlesungen \"uber continuierliche Gruppen mit
Geometrishen und anderen Anwendungen}.
Teubner, Leipzig, 1893.

\bibitem{Springer1998}
Springer T. A.
\emph{Linear Algebraic Groups}.
2nd ed.
Boston: Birkh\"auser, 1998.
doi:10.1007/978-0-8176-4840-4.

\bibitem{PopovVinberg1994}
Popov V. L., Vinberg E. B.
Invariant theory.
In: Parshin A. N., Shafarevich I. R., eds.
\emph{Algebraic Geometry IV: Linear Algebraic Groups,
Invariant Theory}.
Encyclopaedia of Mathematical Sciences, Vol.~55.
Berlin: Springer-Verlag, 1994, 123--278.
doi:10.1007/978-3-662-03073-8\_2.

\bibitem{FlusserSuk1993}
Flusser J., Suk T.
Pattern recognition by affine moment invariants.
\emph{Pattern Recognition}
1993, \textbf{26}(1), 167--174.
doi:10.1016/0031-3203(93)90098-H.

\end{thebibliography}
\end{document}